\newtheorem{theorem}{Theorem}[section]
\newtheorem{proposition}[theorem]{Proposition}
\newtheorem{corollary}[theorem]{Corollary}
\newtheorem{lemma}[theorem]{Lemma}
\theoremstyle{definition}
\newtheorem{definition}[theorem]{Definition}
\newtheorem{example}[theorem]{Example}
\newtheorem{remark}[theorem]{Remark}
\newtheorem{question}[theorem]{Question}
\newcommand\alphabet{\mathcal{A}}
\newcommand\mcX{\mathsf{X}}
\newcommand\mcXp{\mathsf{X}_{\Omega}^{(\base)}}
\newcommand\mcXpp{\mathsf{X}_{\Omega}^{(2)}}
\newcommand\base{\mathit{l}}
\newcommand\baseq{\mathit{L}}
\newcommand\Pint{\mathbb{P}}
\newcommand\Zint{\mathbb{Z}}
\begin{document}

\title{Topologically Mixing Properties of Multiplicative Integer System}

\author[Jung-Chao Ban]{Jung-Chao Ban}
\address[Jung-Chao Ban]{Department of Mathematical Sciences, National Chengchi University, Taipei 11605, Taiwan, ROC.}
\address{Math. Division, National Center for Theoretical Science, National Taiwan University, Taipei 10617, Taiwan. ROC.}
\email{jcban@nccu.edu.tw}

\author[Chih-Hung Chang]{Chih-Hung Chang}
\address[Chih-Hung Chang]{Department of Applied Mathematics, National University of Kaohsiung, Kaohsiung 81148, Taiwan, ROC.}
\email{chchang@nuk.edu.tw}

\author[Wen-Guei Hu]{Wen-Guei Hu}
\address[Wen-Guei Hu]{College of Mathematics, Sichuan University, Chengdu, 610064, China}
\email{wghu@scu.edu.cn}

\author[Guan-Yu Lai]{Guan-Yu Lai}
\author[Yu-Liang Wu]{Yu-Liang Wu}
\address[Guan-Yu Lai and Yu-Liang Wu]{Department of Applied Mathematics, National Chiao Tung University, Hsinchu 30010, Taiwan, ROC.}
\email{guanyu.am04g@g2.nctu.edu.tw; s92077.am08g@nctu.edu.tw}

\thanks{Ban and Chang are partially supported by the Ministry of Science and Technology, ROC (Contract No MOST 107-2115-M-259 -001 -MY2 and 107-2115-M-390 -002 -MY2). Hu is partially supported by the National Natural Science Foundation of China (Grant No.11601355).}

\date{October 30, 2019}

\baselineskip=1.2\baselineskip

\begin{abstract}
Motivated from the study of multiple ergodic average, the investigation of multiplicative shift spaces has drawn much of interest among researchers. This paper focuses on the relation of topologically mixing properties between multiplicative shift spaces and traditional shift spaces. Suppose that $\mathsf{X}_{\Omega}^{(l)}$ is the multiplicative subshift derived from the shift space $\Omega$ with given $l > 1$. We show that $\mathsf{X}_{\Omega}^{(l)}$ is (topologically) transitive/mixing if and only if $\Omega$ is extensible/mixing. After introducing $l$-directional mixing property, we derive the equivalence between $l$-directional mixing property of $\mathsf{X}_{\Omega}^{(l)}$ and weakly mixing property of $\Omega$.
\end{abstract}
\maketitle

\section{Introduction}

Let $\mathcal{A} = \{0,1,\ldots ,m-1\}$ be a finite alphabet and let $\Omega \subseteq \mathcal{A}^{\mathbb{N}}$ be a shift space with the shift map $\sigma: \Omega \to \Omega$ defined by $(\sigma x)_{i} = x_{i+1}$ for $i \in \mathbb{N}$. Suppose $1 < l$ is a natural number. Kenyon et al.~\cite{KPS-ETDS2012} defined the \emph{multiplicative subshift} $\mcXp \subseteq \mathcal{A}^{\mathbb{N}}$ as
\begin{equation}
\mcXp=\{x=(x_{k})_{k=1}^{\infty }\in \mathcal{A}^{\mathbb{N}}:(x_{i l^{n-1}})_{n \in \mathbb{N}}\in \Omega \text{ for all } i\}.  \label{4}
\end{equation}%
The name ``\emph{multiplicative subshift}'' follows from the fact $\mcXp$ is multiplicatively invariant in the sense $\Pi_{q} \mcXp \subseteq \mcXp$ for all $q \in \mathbb{N}$, where $\Pi_{q}x = (x_{q k})_{k=1}^{\infty}$. The study of \eqref{4} takes its origin from the multifractal analysis of the $0$-level set $E_\Phi (0)$ considered by Fan et al.~\cite{FLM-MM2012}, where
\begin{equation}
E_{\Phi }(\theta)=\{x\in \{0, 1\}^{\mathbb{N}}:\lim_{n\rightarrow \infty } \frac{1}{n}\sum_{k=1}^{n}x_{k}x_{2k}=\theta\}. \label{2}
\end{equation}
The study of \eqref{2} is a special case of the multiple ergodic averages $A_{n}\Phi (x):=\frac{1}{n}\sum_{k=0}^{n-1}\Phi (T_{1}^{k}x,\ldots,T_{d}^{k}x)$ with $d=2$, $T_{i}= \sigma^i$, and $\Phi(x,y)=x_1 y_1$. In the same paper, they also studied the subset
\begin{equation}
\mcX_{2}=\{x\in \{0, 1\}^{\mathbb{N}}:x_{i}x_{2i}=0\text{ for all } i\}  \label{5}
\end{equation}%
of \eqref{2}. It is easily seen that $\mcX_{2}$ is a special type of $\mcXp$ in
which $\Omega$ is the \emph{golden mean shift} (it is called ``\emph{multiplicative golden mean shift}'' in \cite{KPS-ETDS2012}). Moreover, the box dimensions of $\mcX_{2}$ is $\dim _{B}\mcX_{2}=\frac{1}{2\log 2} \sum_{n=1}^{\infty }\frac{\log F_{n}}{2^{n}}$, where $F_{n}$ is the Fibonacci sequence: $F_{0}=1$, $F_{1}=2$ and $F_{n+2}=F_{n+1}+F_{n}$ for $n\geq 0$. Later, Kenyon et al.~\cite{KPS-ETDS2012} obtained the general formula of Hausdorff and box dimensions of $\mcX_{\Omega }$. It is known that the subshift corresponding to the closed invariant subsets of $[0,1]$ under the map $x\mapsto mx \pmod{1}$ has the property that the Hausdorff and box dimensions are coincident \cite{furstenberg1967disjointness}. Even though the Hausdorff and box dimension of $\mcX_{\Omega}$ are not coincident generally (the Hausdorff dimension is less than or equal to the box dimension), the characterization of the equality is addressed therein. Furthermore, Peres and Solomyak \cite{peres2011dimension} gave the full dimension spectrum of $\dim_{H}E_{\Phi }(\theta )$, and mentioned that $\dim _{H}\mcX_{2}=\dim_{H}E_{\Phi }(0)$. Beyond $\mcX_{2}$, more dimension results can be found. Peres et al.~\cite{peres2014dimensions} considered the multiplicative subshifts $\mcX_{\Omega }^{(S)}$, for which $S$ is the semigroup generated by primes $p_{1},\ldots ,p_{k}$. Namely, 
\begin{equation*}
\mcX_{\Omega }^{(S)}:=\{x=(x_{k})_{k=1}^{\infty }\in \mathcal{A}^{\mathbb{N}}:x|_{iS}\in \Omega \text{ for all } i \text{ such that }(i,S)=1\}\text{.}
\end{equation*}
A typical example of $\mcX_{\Omega }^{(S)}$ is 
\begin{equation*}
\mcX_{2,3}=\{x\in \{0, 1\}^{\mathbb{N}}:x_{k}x_{2k}x_{3k}=0\text{ for all } k\}\text{,}
\end{equation*}
where $S$ is the semigroup generated by $2$ and $3$. The authors in \cite{peres2014dimensions} extended \cite{KPS-ETDS2012} to $\mcX_{\Omega }^{(S)}$ and obtained the Hausdorff and box dimensions of $\mcX_{\Omega }^{(S)}$. Ban et al.~\cite{ban2017pattern} approximated the box dimension $\dim_{B}\left( \mcX_{\Omega}^{(S)}\cap \Omega \right)$ for the case where $\Omega$ is a subshift of finite type. Fan et al.~\cite{FSW-AM2016} gave a complete solution to the problem of multifractal analysis of the limit of the multiple ergodic averages $\frac{1}{n} \sum_{i=1}^n \phi(x_i, x_{il}, \ldots, x_{il^{k-1}})$ for $k, l \geq 2$. We refer to \cite{fan2014some} for a nice state-of-the-art survey of the the multiple ergodic averages.

\begin{figure}[t]
        \begin{tikzpicture}[scale=0.5, transform shape]
        \draw  (1,-0.825) ellipse (7.3 and 3.675) ;
        \node[] at (1,2.1) {\Large $\Omega$: extensible / $\mcXp$: transitive};
        
        \draw  (1,-1.3) ellipse (6.5 and 2.8);
        \node[] at (1,0.75) {\Large $\Omega$: transitive};
        
        \draw  (1,-1.7) ellipse (6 and 1.9);
        \node[] at (1,-0.65) {\Large $\Omega$: weakly mixing / $\mcXp$: $l$-directional mixing };
        
        \draw  (1,-2.25) ellipse (4.5 and 0.8);
        \node[] at (1,-2.25) {\Large $\Omega$: mixing / $\mcXp$: mixing};
        \end{tikzpicture}
         \caption{Summary of the result}
         \label{fig:summary}
\end{figure}
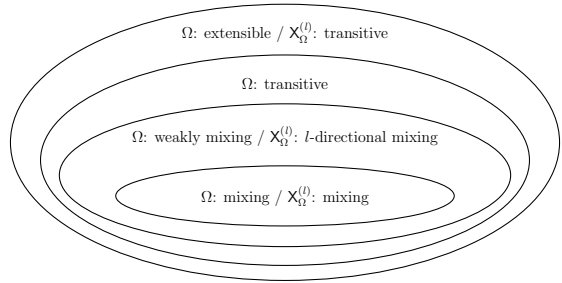

Besides the dimensional aspect of the multiplicative subshifts, the topological behaviors of $\mcXp$ or $\mcX_{\Omega }^{(S)}$ are also fascinating. This paper aims to connect the mixing properties of $\Omega $ and $\mcXp$. To be more specific, assuming $\Omega $ possesses some topological property \textbf{P}, can we say something about the mixing properties of $\mcXp$? Are the properties  $\mcXp$ equipped with stronger or weaker than property \textbf{P}? In other words, the goal of this paper is trying bridge the topological behaviors of the two spaces: one is additively invariant and the other is multiplicatively invariant. Theorems \ref{thm:extensibility_equivalence}, \ref{thm:weak_mixing_equivalence}, and \ref{thm:mixing_equivalence} are the main results of this paper (cf.~Figure \ref{fig:summary}).

\begin{theorem} \label{thm:extensibility_equivalence}
        The following are equivalent.
        
        \begin{enumerate}
        \item $\Omega$ is extensible.
        \item $\mcXp$ is transitive.
        \item For $u,v\in B(\mcXp)$ there exists $\alpha \in \mathbb{N} \setminus \base \mathbb{N}$ such that for any $k\in \mathbb{N}_{0}$ there exists $x\in \mcXp$ such that $x|_{s(u)}=u$ and $(\Pi_{\vert u \vert \alpha \base^k s(v)} x)
        |_{s(v)}=v$.
        \item $\mcXp$ is $\baseq$-directional mixing for some $\baseq$ which has a  prime factor $p \in \Pint$ satisfying $p \nmid \base$.
        \item $\mcXp$ is $\baseq$-directional mixing for every $\baseq$ which has a  prime factor $p \in \Pint$ satisfying $p \nmid \base$.
        \end{enumerate}
\end{theorem}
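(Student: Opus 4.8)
The plan is to exploit the product structure of $\mcXp$ over its multiplicative fibers and then to close the cycle $(1)\Rightarrow(5)\Rightarrow(4)\Rightarrow(3)\Rightarrow(2)\Rightarrow(1)$.

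The structural observation I would isolate first is this: every $k\in\mathbb{N}$ is uniquely $k=i\base^{t}$ with $\base\nmid i$, so $\mathbb{N}$ is partitioned into the fibers $F_{i}=\{i,i\base,i\base^{2},\dots\}$ indexed by the $\base$-free integers $i$, and a point of $\mcXp$ is exactly an independent choice of one $\Omega$-sequence along each fiber. Prescribing finitely many coordinates of a point of $\mcXp$ therefore amounts to prescribing, fiber by fiber, finitely many coordinates of an $\Omega$-point, with no coupling between distinct fibers. Two mechanisms then govern everything. Multiplying an index by a factor divisible by $\base$ raises its $\base$-adic valuation and so pushes it deeper inside its fiber; multiplying by a factor carrying a prime $p\in\Pint$ with $p\nmid\base$ alters the $\base$-free part and so moves the index into a \emph{different} fiber. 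The first mechanism is what makes left-extendability (i.e.\ extensibility) relevant; the second is what lets one decouple the two words $u$ and $v$ and thereby avoid having to connect them inside a single fiber.

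For $(1)\Rightarrow(3)$, and in the same breath $(1)\Rightarrow(5)$, I would argue constructively. Given $u,v\in B(\mcXp)$, fix a prime $\alpha$ with $\alpha\nmid\base$ that is larger than every element of $s(u)$; then for each $j\in s(v)$ the $\base$-free part of $\vert u\vert\alpha\base^{k}j$ is a multiple of $\alpha$, hence larger than every element of $s(u)$, and moreover distinct $j$ land in distinct fibers, so the fibers met by $\vert u\vert\alpha\base^{k}\,s(v)$ are disjoint from those met by $s(u)$. On the fibers of $s(u)$ I realise $u$ (possible because $u\in B(\mcXp)$); on the fibers of the rescaled $v$ the prescribed coordinates now sit at $\base$-adic depth at least $k$, so extensibility of $\Omega$ lets me left-extend the corresponding $\Omega$-words as many times as needed, place them at the required depth, and complete to honest $\Omega$-points; every remaining fiber receives an arbitrary $\Omega$-point. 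This yields, for each $k\in\mathbb{N}_{0}$, a point $x\in\mcXp$ with $x|_{s(u)}=u$ and $(\Pi_{\vert u\vert\alpha\base^{k}s(v)}x)|_{s(v)}=v$, which is precisely $(3)$. The same construction yields $\baseq$-directional mixing for \emph{every} admissible $\baseq$: the prime factor $p\nmid\base$ provides the fiber-spreading that plays the role of $\alpha$, while the deepening built into the $\baseq$-directional scaling is absorbed by extensibility; this is $(5)$.

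The remaining links are comparatively formal. The implication $(5)\Rightarrow(4)$ is immediate once one admissible $\baseq$ is fixed. For $(4)\Rightarrow(3)$ I would extract from a witnessing $\baseq$ its prime factor $p\nmid\base$, use it to manufacture the coprime-to-$\base$ multiplier $\alpha$, and read the ``for all sufficiently large $n$'' clause of mixing as the ``for all $k\in\mathbb{N}_{0}$'' clause of $(3)$, the $\base$-adic depth furnishing the factor $\base^{k}$; then $(3)\Rightarrow(2)$ is the paper's definition of transitivity unwound. The genuinely delicate step, which I expect to be the main obstacle, is the converse $(2)\Rightarrow(1)$. I would argue by contraposition: if $\Omega$ is not extensible there is a word $w\in B(\Omega)$ admitting no left extension, so in any point of $\mcXp$ the word $w$ can occur along a fiber only starting at $\base$-adic depth $0$. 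Transitivity, however, forces a rescaled copy of the pattern determined by $w$ to be placed arbitrarily deep inside some fiber — this is exactly what the $\base^{k}$ factor demands — and such a deep placement would require $w$ at positive depth, which is impossible. Making this airtight is where the care lies: one must verify that the obstruction cannot be evaded by relocating $w$ to another fiber or by choosing a different multiplier, and one must keep the fiber bookkeeping uniform in $k$ so that a single non-extendable word defeats the whole ``for all $k$'' clause. Once this is done the cycle closes and the five statements are equivalent.
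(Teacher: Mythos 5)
Most of your cycle tracks the paper's own proof: your fiber decomposition is exactly the paper's partition $\{\Lambda_{[i]}\}_{i\in\mathbb{N}}$, your $(1)\Rightarrow(3)$ and $(1)\Rightarrow(5)$ (a large prime $\alpha\nmid\base$, resp.\ a power $p^{k}>\xi(u)$ with $p\mid\baseq$, $p\nmid\base$, pushing the rescaled $s(v)$ onto fibers disjoint from those of $s(u)$, after which extensibility of $\Omega$ finishes fiberwise) are the paper's arguments, $(3)\Rightarrow(2)$ and $(5)\Rightarrow(4)$ are immediate in both treatments, and your contrapositive $(2)\Rightarrow(1)$ is the paper's direct argument read backwards: for $u'$ of length $\base^{m}$ and $v'$ with $v'|_{\Lambda_1}=w$, \emph{any} transitivity witness places $w$ at depth at least $m$ in some fiber sequence, which lies in $\Omega$ and can be shifted, so no relocation of fiber or multiplier evades the obstruction.

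The genuine gap is your step $(4)\Rightarrow(3)$. You misquote the hypothesis: $\baseq$-directional mixing (Definition \ref{def:directional_mixing}) has no ``for all sufficiently large'' clause; it asserts that there \emph{exists} one exponent $k_{0}$ (which you do not control) such that for \emph{all} $\alpha'\in A_{\baseq}$ the multiplier $\vert u\vert\alpha'\baseq^{k_{0}}$ is realizable. Thus, for a given pair $(u,v)$, the only multipliers $(4)$ hands you are $\{\vert u\vert\alpha'\baseq^{k_{0}}:\baseq\nmid\alpha'\}$, while $(3)$ demands a single $\alpha$ with $\base\nmid\alpha$ such that $\vert u\vert\alpha\base^{k}$ is realizable for \emph{every} $k\in\mathbb{N}_{0}$, including $k=0$. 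Matching multipliers would require $\alpha\base^{k}=\alpha'\baseq^{k_{0}}$ for each $k$, and at $k=0$ this forces $\baseq^{k_{0}}\mid\alpha$. Take $\base=2$ and $\baseq=6$ (admissible, with $p=3$): if $k_{0}\geq 1$, every multiplier supplied by $(4)$ is divisible by $6^{k_{0}}$, hence even, whereas $(3)$ at $k=0$ needs a multiplier $\vert u\vert\alpha$ with $\alpha$ odd; the two sets of multipliers are disjoint, so no choice of $\alpha$ can ``read one clause as the other.'' This is exactly why the paper does not prove $(4)\Rightarrow(3)$ directly but proves $(4)\Rightarrow(1)$: apply directional mixing with $\alpha'=1$ to $u'$ of length $\base^{m+1}$ and $v'$ with $v'|_{\Lambda_1}=u$, write $\vert u'\vert\baseq^{k_{0}}=i\,\base^{m+1+c}$ with $\base\nmid i$, and read off the fiber $\Lambda_{[i]}$ shifted by $c$ to obtain an $\Omega$-point carrying $u$ at depth $m$; then $(1)\Rightarrow(3)$ is your own constructive step. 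Rerouting your chain as $(4)\Rightarrow(1)\Rightarrow(3)$ repairs the proof; the direct transcription you propose does not.
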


\begin{theorem}\label{thm:weak_mixing_equivalence}
        The following are equivalent. 

        \begin{enumerate}
        \item $\Omega $ is weakly mixing.
        \item $\mcXp$ is $\base$-directional mixing.
        \item $\mcXp$ is $\base^n$-directional mixing for every $n \in \mathbb{N}$.
        \item $\mcXp$ is $\base^n$-directional mixing for some $n \in \mathbb{N}$.
        \end{enumerate}
        Furthermore, if $\base$ is a prime number, the following is equivalent as above.
        \begin{enumerate}
        \setcounter{enumi}{4}
        \item $\mcXp$ is $\baseq$-directional mixing for every $\baseq > 1$.
        \end{enumerate}
\end{theorem}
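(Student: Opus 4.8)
The plan is to exploit the \emph{fiber decomposition} of $\mcXp$. Every $k\in\mathbb{N}$ factors uniquely as $k=i\base^{m}$ with $\base\nmid i$ and $m\ge 0$, so $\mathbb{N}$ splits into the disjoint rays $R_i=\{i,i\base,i\base^{2},\dots\}$ indexed by $i\in\mathbb{N}\setminus\base\mathbb{N}$. The defining condition of $\mcXp$ couples coordinates only \emph{within} a ray, so restriction to rays is a bijection $\mcXp\cong\prod_{\base\nmid i}\Omega$, under which $\Pi_{\base^{n}}$ acts on each ray-copy of $\Omega$ as $\sigma^{n}$ (since $(\Pi_{\base^{n}}x)_{i\base^{m}}=x_{i\base^{m+n}}$). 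Hence $\base^{n}$-directional mixing of $\mcXp$ is, after unfolding the finitely many rays met by two blocks $u,v\in B(\mcXp)$, a statement about simultaneously connecting finitely many pairs of words in $\Omega$ across prescribed gaps. I would establish $(1)$--$(4)$ through the cycle $(1)\Rightarrow(3)\Rightarrow(2)\Rightarrow(4)\Rightarrow(1)$, where $(3)\Rightarrow(2)$ and $(2)\Rightarrow(4)$ are the immediate specializations $n=1$.

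For $(1)\Rightarrow(3)$, fix $u,v$ and let $R_{i_1},\dots,R_{i_r}$ be the rays meeting $s(u)\cup s(v)$; on $R_{i_t}$ the block $u$ prescribes a word $a_t$ and $v$ prescribes a word $b_t$, each in $B(\Omega)$. Writing $D(a,b)=\{g\ge 0:awb\in B(\Omega)\text{ for some }|w|=g\}$, the key input is the standard fact that weak mixing of $\Omega$ makes the product connection set $P=\bigcap_{t}D(a_t,b_t)$ \emph{thick}: weak mixing of $\Omega$ forces $\Omega^{r}$ to be weakly mixing, and for a weakly mixing system $N(U,V)$ is thick for all nonempty open $U,V$, so $P$ (which coincides up to a bounded translation with $\bigcap_t N([b_t],[a_t])$) contains arbitrarily long intervals. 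Now $\Pi_{\base^{nk}}$ displaces $b_t$ by $nk$ positions on $R_{i_t}$, so realizing $u$ together with the $\Pi_{\base^{nk}}$-image of $v$ in a single $x\in\mcXp$ amounts to choosing $k$ with $nk+c_t\in D(a_t,b_t)$ for all $t$, where $c_t$ is a bounded ray-dependent offset. Choosing a long interval $[M,M+T]\subseteq P$ with $T$ exceeding $n+\max_{t}c_t-\min_{t}c_t$ lets me force every gap $nk+c_t$ into $[M,M+T]\subseteq D(a_t,b_t)$ for a suitable $k$; since $M$ may be taken arbitrarily large, this gives $\base^{n}$-directional mixing. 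Rays disjoint from $s(u)\cup s(v)$ are filled by any point of $\Omega$.

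For $(4)\Rightarrow(1)$ I reverse the encoding. Given $a_1,b_1,a_2,b_2\in B(\Omega)$, I apply $\base^{n}$-directional mixing to the block $u$ carrying $a_1,a_2$ on two distinct rays $R_{i_1},R_{i_2}$ and the block $v$ carrying $b_1,b_2$ on the same rays (both lie in $B(\mcXp)$ since the ray-words lie in $B(\Omega)$), arranging the ray-positions so the two forced gaps coincide, e.g. aligning the right ends of $a_1,a_2$ and the left ends of $b_1,b_2$ after the $\Pi_{\base^{nk}}$-shift. The resulting $x\in\mcXp$ then exhibits a single $g=nk-|a_1|\in D(a_1,b_1)\cap D(a_2,b_2)$, which is Furstenberg's common-gap criterion for weak mixing of $\Omega$. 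Finally, when $\base$ is prime every $\baseq>1$ is either a power of $\base$ or has a prime factor $p\nmid\base$: in the first case $(1)\Leftrightarrow(3)$ yields $\baseq$-directional mixing, and in the second Theorem~\ref{thm:extensibility_equivalence} identifies $\baseq$-directional mixing with extensibility, which weak mixing implies; conversely $(5)$ contains the case $\baseq=\base$, i.e.\ $(2)$. This gives $(1)\Leftrightarrow(5)$, the primality of $\base$ being used precisely to secure the dichotomy.

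The main obstacle is the offset bookkeeping in $(1)\Rightarrow(3)$: the ray-dependent offsets $c_t$ genuinely differ, so a single common gap does not suffice and one must use the full strength of thickness (arbitrarily long intervals of common connection gaps) rather than mere nonemptiness of $P$. This is also the conceptual reason the equivalence is with \emph{weak} mixing and not with mixing, since demanding a connection for \emph{all} large $k$ would force each $D(a_t,b_t)$ to be cofinite, i.e.\ $\Omega$ mixing. I would therefore take care to match the quantifier in the definition of directional mixing (arbitrarily large $k$) to the thickness of $P$, and to verify that the finitely-supported blocks constructed on a few rays indeed belong to $B(\mcXp)$.
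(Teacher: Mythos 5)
Your overall architecture is sound, and where it is correct it genuinely differs from the paper: the paper handles the bookkeeping by padding each $v$-ray word with a bounded prefix $w^{(r,j)}$ and then invoking Proposition \ref{prop:weak_mixing} (a single common gap for a finite family of pairs), whereas you invoke thickness of the common return-time set, justified through weak mixing of the finite product $\Omega^r$. Both mechanisms rest on Furstenberg's product theorem, and thickness can indeed absorb bounded offset discrepancies, which the paper's padding accomplishes instead. Your treatment of (4) $\Rightarrow$ (1) (place $a_1,a_2,b_1,b_2$ on two rays, align right ends, take $\alpha=1$, read off a common gap) matches the paper's, and your dichotomy for (5) when $\base$ is prime --- every $\baseq>1$ is either a power of $\base$ or has a prime factor $p \nmid \base$, in which case Theorem \ref{thm:extensibility_equivalence} applies --- is exactly the intended argument, stated more explicitly than in the paper.

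However, there is a concrete error in your proof of (1) $\Rightarrow$ (3): you pair $u$'s word $a_t$ and $v$'s word $b_t$ on the \emph{same} ray $R_{i_t}$, and you work with $\Pi_{\base^{nk}}$, which does act on each ray as $\sigma^{nk}$. But $\base^n$-directional mixing quantifies over $\Pi_{|u|\alpha\base^{nk}}$ for \emph{every} $\alpha \in A_{\base^n}$, and multiplication by $|u|\alpha$ permutes the rays: the $v$-content on $R_j$ lands in the ray containing $|u|\alpha j$, which is in general not $R_j$ and which varies with $\alpha$. (Ray preservation holds only when $|u|\alpha$ is a power of $\base$ --- available in (4) $\Rightarrow$ (1), where you may choose $|u|$ to be a power of $\base$ and take $\alpha=1$, but not here, where $\alpha$ is universally quantified and $u$ is given.) Consequently $P=\bigcap_t D(a_t,b_t)$ is the wrong set, and the point $x$ you build, with $b_t$ placed after $a_t$ on $R_{i_t}$, does not satisfy $(\Pi_{|u|\alpha\base^{nk}}x)|_{s(v)}=v$. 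The repair keeps your architecture: intersect over \emph{all} pairs, $P=\bigcap_{t,s}D(a_t,b_s)$, which is still a finite intersection and still thick by your product argument, and for each $\alpha$ match each $v$-ray to the ($\alpha$-dependent) $u$-ray it lands in, with offset $c_{t,s,\alpha}$. At that point you must also prove that these offsets are bounded \emph{uniformly in} $\alpha$; this is not obvious when $\base$ is composite (e.g.\ $\base=6$ and $\alpha=2^{100}\in A_6$, where only the bounded $3$-adic valuation saves the day) and is precisely the content of Lemma \ref{lem:p_offset} and Corollary \ref{cor:p_offset}, which your write-up asserts without proof. Finally, rays that receive $v$-content but carry no $u$-content require placing a word at a prescribed large position, i.e.\ extensibility of $\Omega$ (which weak mixing supplies), not merely filling with ``any point of $\Omega$''.
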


\begin{theorem}\label{thm:mixing_equivalence}
        The following are equivalent.
        \begin{enumerate}
        \item  $\Omega$ is mixing.
        \item $\mcXp$ is mixing.
        \item For $u,v \in B(\mcXp)$, there exists $N \in \mathbb{N}$ such that for $k \geq N$ and $\alpha \in \mathbb{N} \setminus \base \mathbb{N}$ there exists $x \in \mcXp$ such that $x|_{s(u)}=u$ and $( \Pi_{\vert u \vert \alpha \base^{k}} x)|_{s(v)}=v$.
        \end{enumerate}
\end{theorem}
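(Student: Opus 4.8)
The plan is to prove Theorem~\ref{thm:mixing_equivalence} by establishing the cycle of implications $(1)\Rightarrow(3)\Rightarrow(2)\Rightarrow(1)$, since condition $(3)$ is the concrete combinatorial statement that should mediate between the abstract mixing of $\Omega$ and the mixing of $\mcXp$. The key structural observation is that $\mcXp$ decomposes into independent ``fibers'': for each index $i$ with $\base \nmid i$, the subsequence $(x_{i\base^{n-1}})_{n\in\mathbb{N}}$ ranges over $\Omega$, and distinct such $i$ give independent copies of $\Omega$. Thus a word $u\in B(\mcXp)$ on a support $s(u)$ is really a finite collection of partial words living on these fibers, and realizing a long concatenation amounts to extending each fiber-word inside $\Omega$ while respecting the multiplicative indexing.

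First I would prove $(1)\Rightarrow(3)$. Assume $\Omega$ is mixing, so there is an $N_0$ such that any two words of $\Omega$ can be joined with any gap $\geq N_0$. Given $u,v\in B(\mcXp)$, I would read off the finitely many fiber-words of $u$ and of $v$, bound the number of coordinates each occupies by $\vert u\vert$ and $\vert v\vert$, and then choose $N$ large enough (depending on $N_0$, $\vert u\vert$, $\vert v\vert$) so that placing $v$ at the shifted location $\vert u\vert\,\alpha\base^{k}$ forces every fiber of $v$ to fall strictly beyond the portion of the corresponding $\Omega$-fiber occupied by $u$, with $\Omega$-gap at least $N_0$. The role of $\alpha\in\mathbb{N}\setminus\base\mathbb{N}$ is exactly to guarantee that the shift lands $v$ on a coherent reindexing of the fibers; concretely, $\Pi_{\vert u\vert\alpha\base^{k}}x$ re-reads $x$ along the arithmetic-multiplicative lattice so that $v$ is read on fibers that, after accounting for the factor $\base^{k}$, sit far out along each fiber of $\Omega$. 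With the gap chosen $\geq N_0$ on every fiber, the mixing of $\Omega$ lets me fill in each fiber independently to produce the desired $x\in\mcXp$.

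Next, $(3)\Rightarrow(2)$ is the soft direction: condition $(3)$ produces, for any prescribed $u$ appearing at the origin and any prescribed $v$ appearing after a multiplicative-gap of the special form $\vert u\vert\alpha\base^{k}$, a point of $\mcXp$ realizing both. To deduce ordinary topological mixing of $\mcXp$ I would translate the $\Pi$-indexed placement in $(3)$ into the statement that for all sufficiently large ordinary gaps there is a point with $u$ as a prefix-pattern and $v$ appearing after that gap; here I can choose $\alpha$ freely in $\mathbb{N}\setminus\base\mathbb{N}$ and $k$ large, and since such products $\vert u\vert\alpha\base^{k}$ are cofinal and sufficiently dense in $\mathbb{N}$, every large enough shift is attainable, which is precisely mixing of $\mcXp$.

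Finally, $(2)\Rightarrow(1)$ should follow by the embedding/projection relating the two spaces: the first fiber $i=1$ gives a factor map (or at least a natural surjection onto $\Omega$ via $x\mapsto(x_{\base^{n-1}})_n$), and topological mixing is preserved under such maps, or can be verified directly by projecting mixing pairs of $\mcXp$ down to $\Omega$. I expect the main obstacle to be $(1)\Rightarrow(3)$, specifically the careful bookkeeping of how the operator $\Pi_{\vert u\vert\alpha\base^{k}}$ acts on the fiber decomposition: one must check that after reindexing, the coordinates of $v$ genuinely land on positions strictly beyond those constrained by $u$ on \emph{every} fiber simultaneously, so that the uniform mixing gap $N_0$ of $\Omega$ can be applied fiber-by-fiber without collision. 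Controlling the interaction between the additive shift (the factor $\alpha$) and the multiplicative structure (the factor $\base^{k}$) is the crux, and the coprimality condition $\base\nmid\alpha$ is what keeps the fibers aligned.
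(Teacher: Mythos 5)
Your fiber decomposition, your proof of (1)$\Rightarrow$(3), and your proof of (2)$\Rightarrow$(1) via the projection $x \mapsto x|_{\Lambda_1}$ all match the paper's technique (the paper runs the cycle (1)$\Rightarrow$(2)$\Rightarrow$(3)$\Rightarrow$(1), with the same gap computation along interacting fibers and the same lift $u'|_{\Lambda_1}=u$, $|u'|=\base^{|u|-1}$). The genuine problem is your step (3)$\Rightarrow$(2), which you call ``the soft direction.'' It is not, and your argument for it fails. Mixing of $\mcXp$ (Definition 2.4, equivalently Proposition 2.6(2)) demands that for \emph{every} integer $m \geq N$, writing $m = \alpha\base^{k}$ with $\alpha \in A_\base$, there is a point realizing $u$ and $v$ at displacement $|u|m$. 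Statement (3) only controls the pairs $(\alpha,k)$ with $k \geq N$. The displacements it covers, $\{\alpha\base^{k} : \alpha \in A_\base,\ k \geq N\}$, omit arbitrarily large integers: for any threshold $N'$ there are $m \geq N'$ with $m = \alpha$, $\base \nmid \alpha$, i.e.\ $k=0$, about which (3) says nothing. So ``the products $|u|\alpha\base^{k}$ are cofinal and sufficiently dense, hence every large enough shift is attainable'' is simply false as an inference; density of the attainable displacements is strictly weaker than what the definition of mixing requires.

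The step can be repaired, but not softly. Either do what the paper does: prove the trivial implication (2)$\Rightarrow$(3) (restrict the quantifier: $k \geq N'$ with $\base^{N'} \geq N$ forces $\alpha\base^{k} \geq N$) and close the cycle as (1)$\Rightarrow$(2)$\Rightarrow$(3)$\Rightarrow$(1), so that (3)$\Rightarrow$(2) is obtained only through (3)$\Rightarrow$(1)$\Rightarrow$(2). Or, if you insist on a direct (3)$\Rightarrow$(2), you must handle the missing regime $k < N$, $\alpha$ large: choosing the mixing threshold $N' > \base^{N}\xi(u)$ forces $\alpha > \xi(u)$, and then every shifted fiber $|u|\alpha\base^{k}\Lambda_j$ lands in some $\Lambda_{i'}$ with $i' \geq \alpha > \xi(u)$, disjoint from all fibers carrying $u$; the point is then assembled fiber-by-fiber using extensibility of $\Omega$ at prescribed positions. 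But extensibility is itself something you must first extract from (3) (e.g.\ via (3)$\Rightarrow$(1)), so this route secretly passes through (1) anyway. Your identification of (1)$\Rightarrow$(3) as ``the main obstacle'' has the difficulty located in the wrong place: that direction is the same bookkeeping the paper already does for (1)$\Rightarrow$(2), whereas the quantifier mismatch between (3) and (2) is where your write-up actually breaks.
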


        \begin{remark} \label{rmk:omega_transitive}
                The following example distinguishes Theorems \ref{thm:extensibility_equivalence} and \ref{thm:weak_mixing_equivalence}. Let $\Omega = \mathsf{X}_\mathcal{F}$ be defined by forbidden set $\mathcal{F}=\{01\}$. Then, $\Omega$ is extensible yet not transitive, and $\mcXp$ satisfies Theorem \ref{thm:extensibility_equivalence}. However, $\mcXp$ is not $\base$-directional mixing. It can be verified by considering $u'=0^{\base},v'=1 \in B(\mcXp)$ and $\alpha = 1$. Under the circumstances, no $y \in \mcXp$ should accept $v'$ since $1$ at the the position where $v'$ is located is to the right of $0$.
\end{remark}

We organize the material of this paper as follows. Section 2 elucidates the definitions and propositions that are used in this investigation and gives some examples to illustrate the idea of the proof of the main theorems. Section 3 is devoted to the proofs of Theorems \ref{thm:extensibility_equivalence}, \ref{thm:weak_mixing_equivalence}, and \ref{thm:mixing_equivalence}. Discussions and some open problems are carried out in Section 4.

\section{Definitions and Examples}

Let $\alphabet$ be a finite alphabet and let $\Omega \subset \alphabet^{\mathbb{N}}$ be a shift space with the shift map $\sigma: \alphabet^{\mathbb{N}} \to \alphabet^{\mathbb{N}}$. Denote the set of all admissible words of length $n$ by $B_{n}(\Omega )$ and set $B(\Omega )=\cup _{n\geq 1} B_{n}(\Omega )$. For every $u \in B(\Omega )$ and $x \in \Omega$, let $\vert u \vert$ be the length of $u$ and $x_{[i,j]}=(x_{i},\ldots ,x_{j})$ be the \emph{projection} of $x$ on $[i, j] := \{n \in \mathbb{N}: i \leq n \leq j\}$. 

\begin{definition} \label{def:mixing-transitive-shift-space}
Let $(\Omega, \sigma)$ be a symbolic dynamical system. We say that $(\Omega, \sigma)$ is
\begin{enumerate}[(1)]
\item \emph{extensible} if for all $u\in B(\Omega )$ and for all $m\in \mathbb{N}$, there exists $x\in \Omega $ such that $x_{[m+1,m+\left\vert u\right\vert ]}=u$;
\item \emph{transitive} if for all $u, v \in B(\Omega)$ there exists $m\in \mathbb{N}$ and $x\in \Omega $ such that $x_{[1,\left\vert u\right\vert ]}=u$ and $x_{[|u|+m+1,|u|+m+\vert v \vert]}=v$;
\item \emph{totally transitive} if $\sigma ^{n}$ is transitive for all $n\geq 1$;
\item   \emph{weakly mixing} if for all $u_{1},u_{2},v_{1},v_{2}\in B(\Omega)$ there exist $m \in \mathbb{N}$ and configurations $x^{(1)}, x^{(2)} \in \Omega$ such that $x^{(i)}|_{[1, |u_i|]}=u_i$ and $x^{(i)}|_{[|u_i|+m+1, |u_i|+m+|v_i|]}=v_i$ for $i = 1, 2$;
\item \emph{mixing} if for all $u, v\in B(\Omega)$ there exists $N\in \mathbb{N}$ such that for $m\geq N$ there exists $x\in \Omega $ such that $x_{[1,\left\vert u\right\vert ]}=u$ and $x_{[|u|+m+1, |u|+m+\left\vert v\right\vert ]}=v$.
\end{enumerate}
\end{definition}

It follows from the definitions that 
\begin{equation*}
        \text{mixing}\Rightarrow \text{weakly mixing}\Rightarrow \text{totally transitive}%
        \Rightarrow \text{transitive}\Rightarrow \text{extensible.}
\end{equation*}

Roughly speaking, a weakly mixing system transits any pair of open sets to another pair simultaneously. Furstenberg \cite{furstenberg1967disjointness} demonstrated the property holds for finitely many sets.

        \begin{proposition}[See \cite{furstenberg1967disjointness}] \label{prop:weak_mixing}
                Suppose $\Omega$ is a shift space. Then following statements are equivalent.
                \begin{enumerate}[(a)]
                        \item $\Omega$ is weakly mixing.
                        \item For $\{u_1, u_2, \ldots, u_M\}, \{v_1, v_2, \ldots, v_M\} \subset B(\Omega)$ there exist  $m \in \mathbb{N}$ and configurations $\{x^{(i)}\}_{i=1}^M$ satisfying $x^{(i)}|_{s(u_i)}=u_i$ and $(\sigma^m x^{(i)})|_{s(v_i)}=v_i$ for $1 \leq i \leq M$.
                \end{enumerate}
        \end{proposition}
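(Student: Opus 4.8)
The plan is to obtain the $M$-fold statement (b) from ordinary (two-fold) weak mixing (a) by Furstenberg's product-system argument, with all the substance concentrated in a single intersection lemma for hitting-time sets; the reverse implication (b)$\Rightarrow$(a) is then just the case $M=2$. Write $[u]=\{x\in\Omega:x|_{s(u)}=u\}$ for the cylinder determined by $u\in B(\Omega)$, where $s(u)=[1,|u|]$, and for nonempty open $U,V\subseteq\Omega$ put
\[
N(U,V)=\{m\in\mathbb{N}:U\cap\sigma^{-m}V\neq\emptyset\}.
\]
In this language statement (b) for a fixed family is exactly $\bigcap_{i=1}^{M}N([u_i],[v_i])\neq\emptyset$ (a common element being the sought $m$), while weak mixing in the sense of Definition \ref{def:mixing-transitive-shift-space}(4) says $N([u_1],[v_1])\cap N([u_2],[v_2])\neq\emptyset$ for all cylinders, i.e.\ that $(\Omega\times\Omega,\sigma\times\sigma)$ is topologically transitive. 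Whether the two words are synchronized by a common shift, as above, or by a common gap, as in Definition \ref{def:mixing-transitive-shift-space}(4), is immaterial: replacing the target $[v_i]$ by the open set $\sigma^{-|u_i|}[v_i]$ turns one convention into the other, and the reformulation below is stated for arbitrary open targets.

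The crux, and the only place weak mixing enters, is the following \emph{intersection lemma}: if $\Omega\times\Omega$ is transitive then for all nonempty open $U_1,V_1,U_2,V_2$ there exist nonempty open $U_3,V_3$ with $N(U_3,V_3)\subseteq N(U_1,V_1)\cap N(U_2,V_2)$. I would prove this by applying transitivity of $\Omega\times\Omega$ to the nonempty open sets $U_1\times V_1$ and $U_2\times V_2$: this yields an $m_0$ and a point $(p,q)\in U_1\times V_1$ with $(\sigma^{m_0}p,\sigma^{m_0}q)\in U_2\times V_2$. Setting $U_3=U_1\cap\sigma^{-m_0}U_2\ni p$ and $V_3=V_1\cap\sigma^{-m_0}V_2\ni q$ (both open and nonempty), any $n\in N(U_3,V_3)$, witnessed by $z\in U_3$ with $\sigma^n z\in V_3$, satisfies $z\in U_1,\ \sigma^n z\in V_1$ and also $\sigma^{m_0}z\in U_2,\ \sigma^n(\sigma^{m_0}z)=\sigma^{m_0}(\sigma^n z)\in V_2$; hence $n$ lies in both $N(U_1,V_1)$ and $N(U_2,V_2)$.

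From the lemma the proposition follows formally. Iterating it gives, by induction on $M$, nonempty open $U^{*},V^{*}$ with $N(U^{*},V^{*})\subseteq\bigcap_{i=1}^{M}N([u_i],[v_i])$; and since transitivity of $\Omega\times\Omega$ forces each factor $\Omega$ to be transitive, $N(U^{*},V^{*})\neq\emptyset$, so the intersection is nonempty and any element is the common $m$ demanded by (b). Running the same computation with $[v_i]$ replaced by $\sigma^{-|u_i|}[v_i]$ produces instead a common gap and recovers Definition \ref{def:mixing-transitive-shift-space}(4) verbatim, closing the loop with (a). I expect the intersection lemma to be the only genuine obstacle — it is exactly Furstenberg's observation that the hitting sets of a weakly mixing system form a filter base — and the product-transitivity trick above dispatches it cleanly; the surrounding steps are the induction and the routine translation between the cylinder/hitting-set picture and the word-placement picture of Definition \ref{def:mixing-transitive-shift-space}.
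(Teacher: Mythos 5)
Your proposal is correct, and it is essentially the paper's own approach: the paper gives no proof of this proposition, deferring entirely to the citation of \cite{furstenberg1967disjointness}, and what you have written out — the intersection lemma showing that the hitting-time sets $N(U,V)$ of a topologically weakly mixing system form a filter base, followed by induction on $M$ — is precisely Furstenberg's classical argument that the citation points to. The only points you gloss over are routine: passing between the paper's common-gap formulation of weak mixing and transitivity of $\Omega\times\Omega$ requires choosing base cylinders of equal length (or padding words), and the nonemptiness of the open targets $\sigma^{-|u_i|}[v_i]$ uses extensibility of $\Omega$, which weak mixing supplies.
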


For each shift space $\Omega$ and natural number $l \geq 2$, the multiplicative shift space $\mathsf{X}_{\Omega}^{(l)}$ is defined as
$$
\mcXp=\{x=(x_{k})_{k=1}^{\infty }\in \mathcal{A}^{\mathbb{N}}:(x_{i l^{n-1}})_{n \in \mathbb{N}}\in \Omega \text{ for all }i\}.
$$
We say that $u \in \mathcal{A}^S$ is a \emph{pattern} in $\mathsf{X}_{\Omega}^{(l)}$ if there exist a finite set $S \subset \mathbb{N}$ and $x \in \mathsf{X}_{\Omega}^{(l)}$ such that $x|_S = u$, i.e., $x_i = u_i$ for $i \in S$. In this case, $S$ is called the \emph{support} of $u$ and is denoted by $s(u)$. The \emph{multiplicative map} $\Pi: \mathbb{N} \times \mathsf{X}_{\Omega}^{(l)}$ is defined as
$$
(\Pi_q x)_i := \Pi(q, x)_i = x_{qi} \quad \text{for} \quad i \in \mathbb{N}, x \in \mathsf{X}_{\Omega}^{(l)}.
$$
It is obvious that $\mathsf{X}_{\Omega}^{(l)}$ is invariant under the multiplicative map and thus $(\mathsf{X}_{\Omega}^{(l)}, \{\Pi_q\}_{q \in \mathbb{N}})$ is a dynamical system. Therefore, Definition \ref{def:mixing-transitive-shift-space} can be extended to $\mathsf{X}_{\Omega}^{(l)}$ in a similar vein.

\begin{definition}
Let $(\mathsf{X}_{\Omega}^{(l)}, \{\Pi_q\}_{q \in \mathbb{N}})$ be a multiplicative shift. We say that $(\mathsf{X}_{\Omega}^{(l)}, \{\Pi_q\}_{q \in \mathbb{N}})$ is
\begin{enumerate}[(1)]
\item \emph{transitive} if for $u, v \in B(\mathsf{X}_{\Omega}^{(l)})$ there exist $m \in \mathbb{N}$ and $x \in \mathsf{X}_{\Omega}^{(l)}$ such that $x|_{[1, |u|]}=u$ and $\left(\Pi_{|u| m} x\right)|_{[1, |v|]}=v$;
\item \emph{mixing} if for $u, v\in B(\mathsf{X}_{\Omega}^{(l)})$ there exists $N\in \mathbb{N}$ such that for $m\geq N$ there exists $x\in \mathsf{X}_{\Omega}^{(l)}$ such that $x_{[1,\left\vert u\right\vert ]}=u$ and $\left(\Pi_{|u| m} x\right)|_{[1, |v|]}=v$.
\end{enumerate}
\end{definition}

The idea of these definitions is to connect the patterns $u,v$ in $\mcXp$ under the action of multiplicative semigroup of positive integers. Observe that, for each given integer $l \geq 2$, every natural number $n$ has a unique decomposition $n = \alpha l^k$, where $\alpha \in \mathbb{N} \setminus l \mathbb{N}$ and $k \geq 0$. The following proposition comes from straightforward examination, and thus the proof is omitted.

\begin{proposition}\label{prop:mixing-transitive-multiplicative-shift}
Consider the multiplicative shift $(\mathsf{X}_{\Omega}^{(l)}, \{\Pi_q\}_{q \in \mathbb{N}})$. Then $\mathsf{X}_{\Omega}^{(l)}$ is 
\begin{enumerate}[(1)]
\item \emph{transitive} if and only if for $u,v\in B(\mcXp)$ there exists $(\alpha ,k)\in (\mathbb{N} \setminus \base \mathbb{N}) \times \mathbb{N}_{0}$ and $x \in \mcXp$ such that $x|_{s(u)}=u$ and $\left( \Pi_{\left\vert u\right\vert \alpha l^{k}}x\right) |_{s(v)}=v$;
\item \emph{mixing} if and only if for all $u,v\in B(\mcXp)$ there exists $N\in \mathbb{N}$ such that if $(\alpha ,k)\in (\mathbb{N} \setminus \base \mathbb{N}) \times \mathbb{N}_{0}$ with $\alpha l^k \geq N$ there exists $x\in \mcXp$ such that $x|_{s(u)}=u$ and $\left( \Pi_{\left\vert u\right\vert \alpha l^{k}}x\right) |_{s(v)}=v$.
\end{enumerate}
\end{proposition}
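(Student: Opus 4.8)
The plan is to recognize that both equivalences are reformulations of the two definitions of transitivity and mixing for $\mcXp$ given just above, obtained by re-indexing the single integer parameter $m$ appearing in the transition map $\Pi_{|u|m}$ according to the unique decomposition $m = \alpha\base^{k}$ recalled in the paragraph preceding the statement. Thus the proof rests entirely on one arithmetic fact together with a matching of quantifiers; no dynamics beyond the definitions is involved, which is why it is flagged as a straightforward examination.

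First I would promote the decomposition to a genuine bijection. For $m \in \mathbb{N}$ let $k$ be the largest exponent with $\base^{k} \mid m$ and put $\alpha = m/\base^{k}$; maximality of $k$ forces $\base \nmid \alpha$, so $(\alpha, k) \in (\mathbb{N}\setminus\base\mathbb{N}) \times \mathbb{N}_{0}$. Conversely $(\alpha, k) \mapsto \alpha\base^{k}$ is an inverse, and comparing the exponents of $\base$ shows that $\alpha\base^{k} = \alpha'\base^{k'}$ with $\base \nmid \alpha$ and $\base\nmid\alpha'$ forces $(\alpha,k)=(\alpha',k')$. Hence $m \leftrightarrow (\alpha,k)$ is a bijection from $\mathbb{N}$ onto $(\mathbb{N}\setminus\base\mathbb{N})\times\mathbb{N}_{0}$ which is value-preserving, i.e.\ $m = \alpha\base^{k}$ in each direction. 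Before running the argument I would also record the harmless identification $s(u) = [1,|u|]$ and $s(v) = [1,|v|]$ for words $u, v \in B(\mcXp)$, so that the anchoring condition $x|_{s(u)}=u$ in the statement is literally the condition $x|_{[1,|u|]}=u$ used in the definition.

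For part (1), transitivity asserts the existence of \emph{some} $m \in \mathbb{N}$ and $x \in \mcXp$ with $x|_{s(u)}=u$ and $(\Pi_{|u|m}x)|_{s(v)}=v$. Feeding the bijection into this existential quantifier and substituting $\Pi_{|u|m} = \Pi_{|u|\alpha\base^{k}}$ converts ``there exists $m \in \mathbb{N}$'' into ``there exists $(\alpha,k) \in (\mathbb{N}\setminus\base\mathbb{N})\times\mathbb{N}_{0}$'' while leaving the realizing configuration $x$ and the patterns $u, v$ untouched; this is exactly the claimed condition. For part (2), mixing asserts the existence of $N$ for which the transition is realizable for \emph{every} $m \geq N$. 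Since the bijection is value-preserving, the cutoff $m \geq N$ is word-for-word the cutoff $\alpha\base^{k} \geq N$, so ``for all $m \geq N$'' and ``for all $(\alpha,k)$ with $\alpha\base^{k}\geq N$'' range over the identical collection of transition lengths, and the two formulations coincide.

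I do not anticipate a genuine obstacle: the entire content is the value-preserving bijection $m \leftrightarrow (\alpha,k)$ (so that $m \geq N$ is the same constraint as $\alpha\base^{k}\geq N$), together with the observation that the fixed factor $|u|$ in $\Pi_{|u|m}$ plays no role in the re-indexing. The only point deserving a line of care is the identification of $s(u)$ with $[1,|u|]$ for words, which guarantees that the two phrasings of the placement of $u$ and $v$ are the same.
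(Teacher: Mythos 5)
Your proposal is correct and is precisely the ``straightforward examination'' the paper alludes to: the paper omits the proof, pointing only to the unique decomposition $n = \alpha\base^{k}$ with $\alpha \in \mathbb{N}\setminus\base\mathbb{N}$, $k \in \mathbb{N}_{0}$, stated in the preceding paragraph, and your argument fills this in by upgrading that decomposition to a value-preserving bijection and transferring the quantifiers (including the cutoff $m \geq N$ versus $\alpha\base^{k} \geq N$ for mixing). Nothing further is needed.
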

%

Without causing ambiguity, we define the multiplicative map $\Pi$ on the collection of patterns $B(\mcXp)$ as follows. For each $q \in \mathbb{N}$, the multiplicative map $\Pi_q: \cup_{S \subset \mathbb{N}} \alphabet^{S} \rightarrow \cup_{S \subset \mathbb{N}} \alphabet^{S}$ is defined as $(\Pi_q u)_{i} = u_{q i}$ for $i \in s(u) \cap [1,\frac{|u|}{q}]$. In other words,  Suppose $S$ is a subset of $\mathbb{N}$, we define $((\Pi_q u)|_{S})_i = u_{q j_i}$, where $j_i = \min \{k: k \in S \cap (j_{i-1}, \frac{|u|}{q}], k q \in s(u)\}$ and $j_0 = 0$. In other words, $\Pi_q u$ is an arithmetic subword of $u$ with tolerance $q$.

For a multiplicative shift $\mcXp$ and $m, n \in \mathbb{N}$, we define $m \simeq n$ if and only if $\dfrac{m}{n} = l^i$ for some $i \in \mathbb{Z}$. It follows immediately that $\simeq$ is an equivalence relation. With this we define $\Lambda_{[i]} = \{i l^k: k \in \mathbb{Z}\} \cap \mathbb{N}$ for each $i \in \mathbb{N}$, where $[i]$ denotes the equivalence class of $i$ with respect to $\simeq$. The following proposition comes straightforwardly from the fundamental theorem of arithmetic.

\begin{proposition} \label{prop:space_partition}
Let $\Lambda_{[i]}$ be defined as above. Then
        \begin{enumerate}
                \item $\{\Lambda_{[i]}\}_{i \in \mathbb{N}}$ is a partition of $\mathbb{N}$;
                \item for each $\alpha \in \mathbb{N} \setminus \base \mathbb{N}$, $\alpha \Lambda_{[i]} \subset \Lambda_{[j]}$ for some $[j] \neq [i]$. In particular, $\alpha \Lambda_{[i]} = \Lambda_{[\alpha i]}$ if $\base \in \Pint$, where $\Pint$ denotes the set consisting of all prime numbers.
        \end{enumerate}
\end{proposition}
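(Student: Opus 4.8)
The plan is to treat the two assertions separately, the key unifying observation being that $\Lambda_{[i]}$ is precisely the $\simeq$-equivalence class of $i$. For part (1), I would first note that by the definition of $\simeq$ one has $\Lambda_{[i]} = \{i\base^k : k \in \Zint\} \cap \mathbb{N} = \{n \in \mathbb{N} : n \simeq i\}$, so each $\Lambda_{[i]}$ is exactly the $\simeq$-class of $i$. Since the excerpt already records that $\simeq$ is an equivalence relation, its classes automatically cover $\mathbb{N}$ and are pairwise disjoint, which is assertion (1). Thus part (1) needs only the identification of $\Lambda_{[i]}$ as a $\simeq$-class together with the standard fact that equivalence classes partition the ground set.

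For part (2), fix $\alpha \in \mathbb{N} \setminus \base\mathbb{N}$. Every element of $\alpha\Lambda_{[i]}$ has the form $\alpha i \base^k$ with $i\base^k \in \mathbb{N}$, and any two such elements differ by an integer power of $\base$; hence they are mutually $\simeq$-equivalent, giving $\alpha\Lambda_{[i]} \subseteq \Lambda_{[\alpha i]}$ at once. To see that $[\alpha i] \neq [i]$ (the interesting case being $\alpha > 1$, as $\alpha = 1$ is degenerate), I would argue by contradiction: if $\alpha i \simeq i$ then $\alpha = \base^k$ for some $k \in \Zint$, and $\alpha > 1$ forces $k \geq 1$, whence $\base \mid \alpha$, contradicting $\alpha \notin \base\mathbb{N}$.

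The substantive point is the ``in particular'' equality when $\base \in \Pint$, where I must establish the reverse inclusion $\Lambda_{[\alpha i]} \subseteq \alpha\Lambda_{[i]}$. Given $n = \alpha i \base^k \in \Lambda_{[\alpha i]} \cap \mathbb{N}$, the only delicate case is $k < 0$, where I need $i\base^k = i/\base^{|k|}$ to already be an integer, so that $n = \alpha(i\base^k)$ exhibits $n$ as $\alpha$ times a member of $\Lambda_{[i]}$. From $n \in \mathbb{N}$ we get $\base^{|k|} \mid \alpha i$; since $\base$ is prime and $\base \nmid \alpha$ we have $\gcd(\base^{|k|}, \alpha) = 1$, so Euclid's lemma (the fundamental theorem of arithmetic) yields $\base^{|k|} \mid i$, i.e.\ $i\base^k \in \mathbb{N}$, completing the inclusion and hence the equality.

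The main obstacle, and the sole place where primality is genuinely used, is exactly this coprimality step: for composite $\base$ the divisibility $\base^{|k|} \mid \alpha i$ need not descend to $\base^{|k|} \mid i$, and then the inclusion is strict. I would record the witnessing example $\base = 4$, $\alpha = 2$, $i = 2$, for which $1 \in \Lambda_{[\alpha i]} \setminus \alpha\Lambda_{[i]}$, to confirm that the hypothesis $\base \in \Pint$ cannot be dropped in the second half of part (2).
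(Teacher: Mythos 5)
Your proof is correct and follows the route the paper itself indicates: the paper omits the argument, remarking only that the proposition ``comes straightforwardly from the fundamental theorem of arithmetic,'' and your Euclid's-lemma step $\base^{|k|} \mid \alpha i \Rightarrow \base^{|k|} \mid i$ (using $\base \in \Pint$ and $\base \nmid \alpha$) is precisely where that fact enters. Your counterexample $\base = 4$, $\alpha = 2$, $i = 2$ showing the equality fails for composite $\base$, and your flagging of the degenerate case $\alpha = 1$ (for which the stated conclusion $[j] \neq [i]$ cannot hold), are both sound observations about the statement as written.
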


For the sake of simplicity, we refer to $\Lambda_{[i]}$ as $\Lambda_i$ with an extra requirement that $i$ is the smallest element of $[i]$ for the rest of this paper unless otherwise stated. For every $u \in B(\mcXp)$, we define $u|_{\Lambda_i} \in B(\Omega)$ and $\xi(u)$ as $u|_{\Lambda_i} := (u_{i \base^{j-1}})_{j}$ for $1 \leq j \leq \lfloor \log_l \frac{|u|}{i} \rfloor$ and $\xi(u):=\max\{n:n \le \vert u \vert, \base \nmid n \}$, respectively. In addition, for $1 < q, N \in \mathbb{N}$, let $A_q:=\mathbb{N} \setminus q \mathbb{N}$ denote the set of positive integers which are not divisible by $q$ and $A_{q,N}:=A_q \cap [1,N]$.

\begin{figure}[t]
\includegraphics[]{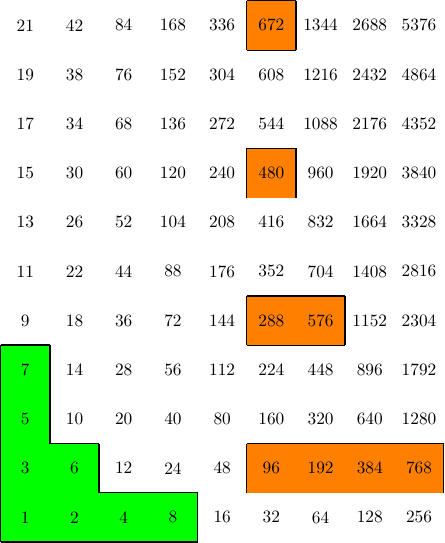}
        \caption{The multiplicative map $\Pi$ breaks the topological structure of the support of pattern in $\mathsf{X}_{\Omega}^{(l)}$. Suppose $l = 2$ and $u \in B_8(\mcXp)$. The support of $u$ is the green region and (part of) support of $(\Pi_{96} x)|_{s(u)}$ is colored in orange. Notably, $s(u)$ is connected while $96 s(u)$ is broken.}
        \label{fig:def_illustration}
\end{figure}

\begin{example}
        Suppose $\alphabet = \{0, 1\}$, $\base = 2$, and $\Omega$ is the one-sided golden mean shift. Then
        \begin{equation*}
                \mcX_2 := \mcXpp = \{x = (x_i)_{i \in \mathbb{N}}: x_i x_{2i} = 0 \text{ for all } i \in \mathbb{N}\}.
        \end{equation*}
        Suppose $u \in B_8 (\mcXp)$ and $x \in \mcXp$ with $(\Pi_{96} x)|_{s(u)}=u$. Then $\xi(u) = 7$, which means that $s(u) \cap \Lambda_7 \neq \varnothing$, see Figure \ref{fig:def_illustration}. The supports of $u$ and $x|_{96 s(u)} = (\Pi_{96} x)|_{s(u)}$ are colored in green and orange, respectively. Observe that the multiplicative map $\Pi$ breaks the topological structure of the support of pattern. In general, for any $u, v \in B(\mcXp)$ and $x \in \mcXp$ such that $(\Pi_{\vert u \vert \alpha \base^k} x)|_{s(v)}=v$ and $x|_{s(u)}=u$, the supports of $u$ and $x|_{\vert u \vert \alpha \base^k s(v)}$ are non-overlapping if $\alpha \base^k \ne 1$. 
\end{example}

One of the main difference between traditional shift spaces and multiplicative shift spaces is that the multiplicative map messes up the topological structure of the underlying space, which makes the investigation of dynamical phenomena of multiplicative shift spaces much more complicated and diversifies mixing properties in multiplicative subshifts. The following definition introduces a mixing property called \emph{directional mixing} that is related to the weakly mixing property in traditional shift spaces.

\begin{definition} \label{def:directional_mixing}
        Suppose $\mcXp$ is a multiplicative shift space. We say that $\mcXp$ is \emph{$q$-directional mixing} for some $q > 1$ if for $u, v \in B(\mcXp)$ there exists $k \in \mathbb{N}_0$ such that for any $\alpha \in A_q$ there exists $x \in \mcXp$ satisfying $x|_{s(u)} = u$ and $(\Pi_{\vert u \vert \alpha q^k} x)|_{s(v)} = v$.
\end{definition}

Theorem \ref{thm:extensibility_equivalence} reveals that $\Omega$ is extensible if and only if $\mcXp$ is transitive. Example \ref{ex:extensibility} yields an observation how Theorem \ref{thm:extensibility_equivalence} holds.

\begin{example} \label{ex:extensibility}
        Let $\Omega = \mcX_\mathcal{F} \subset \Sigma_{2}^\mathbb{N}$ be defined by forbidden set $\mathcal{F}=\{01,11\}$. Apparently, $\Omega$ is extensible. We give the equivalence between the extensibility of $\Omega$ and the transitivity of $\mcXp$ a quick examination through the following discussions with two $l$'s.
        
\noindent        \textbf{1.} Given that $\Omega$ is extensible. Suppose $\base=4$. Let $u'=111111$ and $v'=1111$ be two words in $\mcXp$. Consider $\alpha = 7,$ a prime number greater than $\base = 4$ and $\xi(u')=6$. It follows from the extensibility of $\Omega$ that for any $k \in \mathbb{N}_0$ there exists $x \in \mcXp$ such that $x|_{s(u')}=u'$ and $(\Pi_{\vert u' \vert \alpha \base^k} x)|_{s(v')}=v'$. 
        
\noindent         \textbf{2.} Suppose $\base=2$ and $\mcXp$ is transitive. To show that $u=101 \in B(\Omega)$ is extensible at position $5$, pick $\alpha = 1, k=2$, $u'=0^{32}$, and $v'=1001$, there exists $y \in \mcXp$ such that $y|_{s(u')}=u'$ and $(\Pi_{\vert u' \vert \alpha \base^k} y)|_{s(v')}=v'$. Let $x:=(y_{\alpha\base^{(k+i)}})_{i \in \mathbb{N}}$. Then $x_{[5,7]}=101$.
\end{example}

The following example provides an intuitive viewpoint for examining Theorem \ref{thm:weak_mixing_equivalence}.

        \begin{figure}[t]
\includegraphics[scale=1]{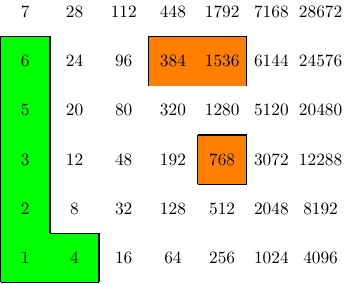}
                \caption{The weakly mixing property of $\Omega$ implies $\mcXp$ is $\base$-directional mixing. When $\alpha \base^k = 1 \cdot 4^3$, the (partial) supports of $u'$ and $x|_{\vert u' \vert \alpha \base^k s(v')}$ are colored in green and orange, respectively. See Example \ref{ex:weak_mixing} for more details.}
                \label{fig:example_weakmixing_converse}
        \end{figure}

\begin{example} \label{ex:weak_mixing}
        For each $P \subset \mathbb{N}_0$ with $0 \in P$, the \emph{spacing shift} introduced in \cite{lau1972weak} is defined as 
        \begin{equation}\label{eqn:spacing_shift}
        \Sigma_P=\{s \in \{0,1\}^\mathbb{N}:s_i=s_j=1 \Rightarrow |j-i| \in P\}.
        \end{equation}
        Let $P=\mathbb{N}_0 \setminus \{2+10^N: N \in \mathbb{N}_0\}$ and $\Omega=\Sigma_P$. Then $\Omega$ is weakly mixing but not mixing since $P$ is thick (see \cite{banks2013dynamics} for more details). We use the following examples to show that \textbf{1.} $\Omega$ is weakly mixing if $\mcXp$ is $\base$-directional mixing; \textbf{2.} $\mcXp$ is $\base$-directional mixing if $\Omega$ is weakly mixing.
        
\noindent         \textbf{1.} Let $\base=2$ and $u_{1}=11, u_{2}=111,v_{1}=111,v_{2}=11 \in B(\Omega)$. It is easily seen that $u'=1110010000010000, v'=111101 \in B(\mcXp)$ and $u'|_{\Lambda_1}=u_{1}$, $u'|_{\Lambda_3}=u_{2}$, $v'|_{\Lambda_1}=v_{1}$, and $v'|_{\Lambda_3}=v_{2}$. More specifically, with $\alpha = 1$ and $k=4$, there is an $x \in \mcXp$ given that $x|_{s(u')}=u'$ and $(\Pi_{\vert u' \vert \alpha \base^k} x)|_{s(v')} = (\Pi_{\base^7} x)|_{s(v')}=v'$ and $x_i=0$ otherwise. Therefore, $u_{1},v_{1}$ are connected in $x|_{\Lambda_1}$ and $u_{2},v_{2}$ are connected in $x|_{\Lambda_3}$.
        
\noindent         \textbf{2.} Let $\base=4$ and $u'=111111, v'=1111 \in B(\mcXp)$. Then $|u'| = \alpha_1 l^{k_1} = 6 \cdot 4^0, \vert v' \vert =4$, and $\alpha_1 \vert v' \vert =24$. It follows from Corollary \ref{cor:p_offset} that for every $\alpha \in \mathbb{N}, k \in \mathbb{N}_0$, and $i \in A_{\base, \vert v' \vert}$, $i \vert u' \vert \alpha \base^k = j \base^{k_1 + k + c}$ for some $j \in A_{\base} \in \mathbb{N}_0$ and $c \leq 2$. To connect $u'|_{\Lambda_i}$ and $v'|_{\Lambda_j}$ for $1 \le i \le \xi(u')$ and $1 \le j \le \xi(v')$, let $k=4$ be assumed so that $[k, k+c+\max_i {\vert u'|_{\Lambda_i} \vert}+\max_i {\vert v'|_{\Lambda_i} \vert}] \subset [4,10] \subset P$. Then for each $\alpha \in A_4$ there exists $x \in \mcXp$ such that $x|_{s(u')}=u'$ and $(\Pi_{4^3 \alpha \vert u' \vert} x)|_{s(v')}=v'$ and $x_i=0$ elsewhere. For the case where $\alpha = 1$, the supports of $u'$ and $(\Pi_{4^3\vert u' \vert} x)|_{s(v')}$ are colored in green and orange respectively in Figure \ref{fig:example_weakmixing_converse}. 
\end{example}

Next, we use the following example to verify Theorem \ref{thm:mixing_equivalence}.

        \begin{figure}[t]
\includegraphics[scale=1]{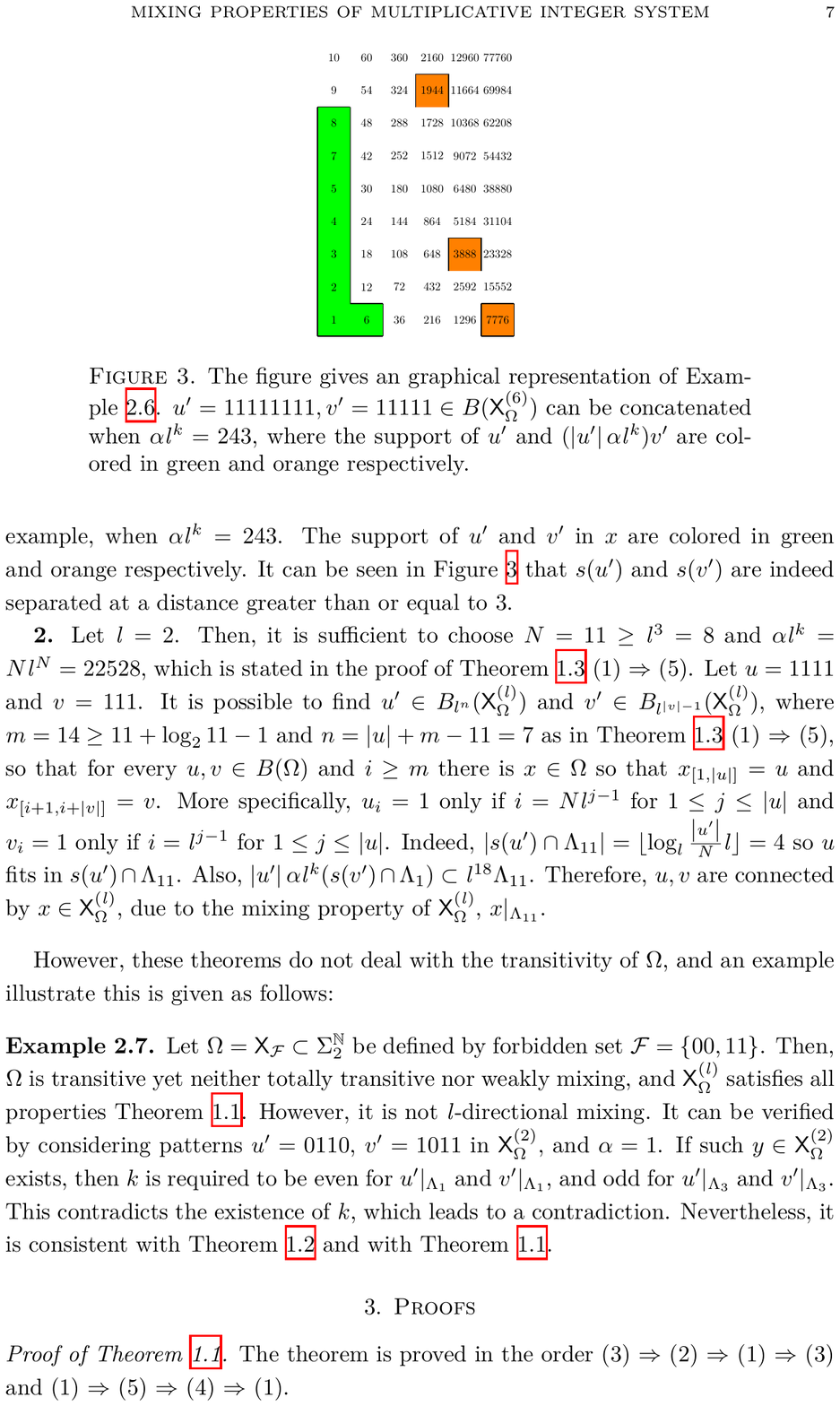}
                \caption{The figure gives an graphical representation of Example \ref{ex:mixing}. Two words $u'=11111111, v'=11111 \in B(\mcX_{\Omega}^{(6)})$ can be concatenated when $\alpha \base^k=243$, where the support of $u'$ and $(\Pi_{\vert u' \vert \alpha \base^k} x)|_{s(v')}$ are colored in green and orange respectively.}
                \label{fig:example_mixing}
        \end{figure}

\begin{example} \label{ex:mixing}
        Let $P=\{0\} \cup \left[3,\infty\right)$ and $\Omega=\Sigma_P$. Then $\Omega$ is mixing since $P$ is cofinite (cf.~\cite{banks2013dynamics}). In this case, given $u,v \in B(\Omega)$, if $x \in \alphabet^\mathbb{N}$ satisfying $x_i=0$ except $x|_{[1,\vert u \vert]}=u$ and $x|_{[\vert u \vert+3+1,\vert u \vert+3+\vert v \vert]}=v$, then $x \in \Omega$. We verify directly that \textbf{1.} $\mcXp$ is mixing if $\Omega$ is mixing and that \textbf{2.} $\Omega$ is mixing if $\mcXp$ is mixing.
        
\noindent         \textbf{1.} Suppose $\base=6$, $u'=11111111$ and $v'=11111$. Then, for every $\alpha \base^k \ge 6^3=216$, there exists $x \in \mcXp$ such that $x|_{s(u')}=u'$ and $(\Pi_{\vert u' \vert \alpha \base^k} x)|_{s(v')}=v'$. For example, when $\alpha \base^k = 243$ the support of $u'$ and $v'$ in $x$ are colored in green and orange respectively in Figure \ref{fig:example_mixing}. It is seen there that $s(u')$ and $s(v')$ are indeed separated at a distance greater than or equal to $3$.
        
\noindent         \textbf{2.} Let $\base=2$. Let $u=1011$ and $v=111$. It is possible to find $u' :=10010001\in B_{l^{|u|-1}}(\mcXp)$ and $v' :=1101\in B_{l^{\vert v \vert-1}}(\mcXp)$ such that $u'|_{\Lambda_1} = u$ and $v'|_{\Lambda_1}=v$. Then, for every $\alpha \base^k \ge \base^3$ there is $y \in \mcXp$ such that $y|_{s(u')}=u'$ and $(\prod_{|u'| \alpha \base^k} y)|_{s(v')}=v'$. In particular, when for every $m \in \mathbb{N}_0$, $\alpha \base^k = \base^{3+m}$ there is $x:=y|_{\Lambda_1} \in \Omega$ so that $x_{[1, \vert u \vert]}=u$ and $x_{[m+1,m+\vert v \vert]}=v$.  
\end{example}

Notably, none of above relations are equivalent to transitivity of $\Omega$. One may refers to Remark \ref{rmk:omega_transitive} to see that it is not equivalent to Theorem \ref{thm:extensibility_equivalence}. As for Theorem \ref{thm:weak_mixing_equivalence}, an example is given as follows.

\begin{example} \label{ex:transitivity}
        Let $\Omega = \mcX_\mathcal{F} \subset \Sigma_{2}^\mathbb{N}$ be defined by forbidden set $\mathcal{F}=\{00,11\}$. Then, $\Omega$ is transitive yet neither totally transitive nor weakly mixing, and $\mcXp$ satisfies all properties in Theorem \ref{thm:extensibility_equivalence}. However, it is not $\base$-directional mixing. It can be verified by considering patterns $u'=0110$, $v'=1011$ in $\mcXpp$, and $\alpha = 1$. If such $y \in \mcXpp$ exists, then $k$ is required to be even for $u'|_{\Lambda_1}$ and $v'|_{\Lambda_1}$, and odd for $u'|_{\Lambda_3}$ and $v'|_{\Lambda_3}$. This contradicts the existence of $k$. Nevertheless, it is consistent with Theorem \ref{thm:weak_mixing_equivalence} and with Theorem \ref{thm:extensibility_equivalence}.
\end{example}

\section{Proofs of Main Theorems}

This section is devoted to demonstrating the main theorems of this paper. We start from the equivalence between extensibility of $\Omega$ and transitivity of $\mcXp$.

        \begin{proof} [Proof of Theorem~\ref{thm:extensibility_equivalence}]
                \begin{figure}[t]
                        \begin{subfigure}[b]{.4\textwidth}
                                \centering
                                \includegraphics[]{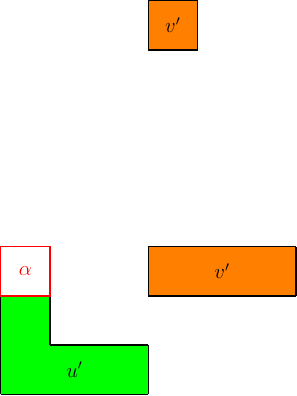}
                                \caption{}
                        \end{subfigure}
                        \begin{subfigure}[b]{.4\textwidth}
                                \centering
                                \includegraphics[]{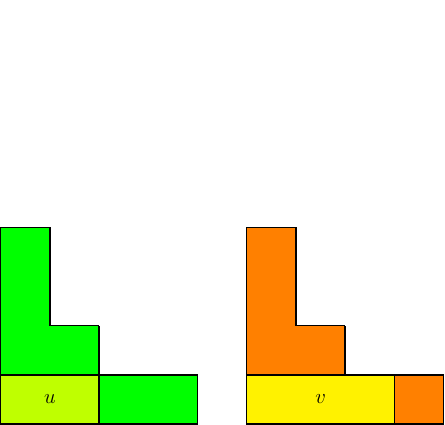}
                                \caption{}
                        \end{subfigure}
                        \caption{An illustration for the equivalence between extensibility of $\Omega$ and transitivity of $\mcXp$. (A) Whenever $\Omega$ is extensible, the transitivity of $\mcXp$ can be derived by separate the given words. (B) If $\mcXp$ is transitive, then $\Omega$ is extensible.}
                        \label{fig:fig3}
                \end{figure}
                
                The theorem is proved in the order (3) $\Rightarrow$ (2) $\Rightarrow$ (1) $\Rightarrow$ (3) and (1) $\Rightarrow$ (5) $\Rightarrow$ (4) $\Rightarrow$ (1). The idea of the proof is referred to Figure \ref{fig:fig3}.
                
                (3) $\Rightarrow$ (2). It follows directly from definition.
                
                (2) $\Rightarrow$ (1). We prove that for $u \in B(\Omega)$ and $m \in \mathbb{N}_0$ there is $x \in \Omega$ such that $x_{[m+1,m+\vert u \vert]}=u$. Let $u',v' \in B(\mcXp)$ such that $\vert u' \vert =\base^m$,  $\vert v' \vert =\base^{\vert u \vert-1}$ and $v'|_{\Lambda_1}=u$. By transitivity of $\mcXp$, there are $\alpha \in A_\base$, $k \in \mathbb{N}_0$ and $y \in \mcXp$ such that $y|_{s(u')} = u'$ and $(\Pi_{\vert u' \vert \alpha \base^k} y)|_{s(v')} = v'$. The proof is completed by letting $x_i := (y_{\alpha \base^{k+i-1}})$.
                
                (1) $\Rightarrow$ (3). We prove that for arbitrary blocks $u', v' \in B(\mcXp)$, and for $k \in \mathbb{N}_0$, there exists $y \in \mcXp$ such that $y|_{s(u')} = u'$ and $(\Pi_{\vert u' \vert \alpha \base^k} y)|_{s(v')} = v'$ whenever $\alpha \in \mathbb{P} \setminus \{\base\}$ with $\alpha > \xi(u')$. Note that for each $\Lambda_i$ with $1 \le i \le \xi(u')$, $(\vert u' \vert \alpha \base^k \Lambda_j) \cap \Lambda_i = \emptyset$ is always the case, so the existence of $y$ is guaranteed by the extensibility of $\Omega$. 

(1) $\Rightarrow$ (5). We claim that for $u', v' \in B(\mcXp)$ and for any $\alpha \in A_{\baseq}$ there exists $y \in \mcXp$ such that $y|_{s(u')} = u', (\Pi_{\vert u' \vert \alpha \baseq^k} y)|_{s(v')} = v'$ whenever $p^k > \xi(u)$, $p|L$ but $p \nmid l$. Since $p^k > \xi(u)$, it follows immediately that
$$
|u'| \alpha L^k \mathbb{N} \cap \Lambda_i = \varnothing \quad \text{for} \quad 1 \leq i \leq \xi(u').
$$
In other words, $\vert u' \vert \alpha L^k s(v')$ is a subset of $\mathbb{N} \setminus \cup_{1 \leq i \leq \xi(u')} \Lambda_i$. Therefore, for each $i \in A_\base$, there exists $x_i \in \Omega$ such that
$$
\left\{
\begin{aligned}
&x_i|_{[1,\left|s(u'|_{\Lambda_i})\right|]} = u'|_{\Lambda_i}, && \text{if } 1 \leq i \leq \xi(u'); \\
&x_i|_{\log_{\base} {\frac{\vert u' \vert \alpha L^k i}{i'}}+[1,\left|v'|_{\Lambda_i}\right|]} = v'|_{\Lambda_i}, && \text{otherwise, where } \vert u' \vert \alpha L^k \Lambda_i \subset \Lambda_{i'};
\end{aligned}
\right.
$$
since $\Omega$ is extensible. Let $y \in \alphabet^\mathbb{N}$ be defined by $y|_{\Lambda_i} = x_i$. Then $y \in \mcXp$ is the desired result.
                
                (5) $\Rightarrow$ (4). It holds automatically since (4) is a particular case of (5).
                
                (4) $\Rightarrow$ (1). We prove that for $u \in B(\Omega)$ and $m \in \mathbb{N}_0$ there is $x \in \Omega$ such that $x_{[m+1,m+\vert u \vert]}=u$. Let $u',v' \in B(\mcXp)$ such that $\vert u' \vert =\base^{m+1}$, that $\vert v' \vert =\base^{\vert u \vert-1}$ and that $v'|_{\Lambda_1}=u$. By $\baseq$-directional mixing property of $\mcXp$, there is $k \in \mathbb{N}_0$ and $y \in \mcXp$ such that $y|_{s(u')} = u'$ and $(\Pi_{\vert u' \vert \baseq^k} y)|_{s(v')} = v'$. Suppose that $\vert u' \vert \baseq^k = i \base^{c + m + 1}$ for some $c \in \mathbb{N}_0$, i.e., $|u'|L^k \in \Lambda_i$ for some $l \nmid i$. The proof is complete by letting $x := (y_{i \base^{c+j}})_{j \in \mathbb{N}}$.
        \end{proof}

        
        \begin{figure}[t]
                \begin{subfigure}[b]{.4\textwidth}
                        \centering
                        \includegraphics[]{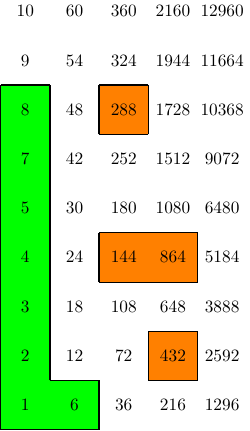} 
                        \label{fig:example_nonleftaligned}
                        \caption{$u \in B_8(\mathsf{X}_{\Omega}^{(6)})$}
                \end{subfigure}
                \begin{subfigure}[b]{.4\textwidth}
                        \centering
                        \includegraphics[]{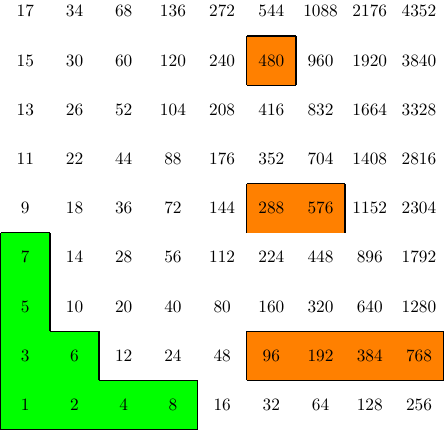} 
                        \label{fig:example_leftaligned}
                        \caption{$u \in B_8(\mathsf{X}_{\Omega}^{(2)})$}
                \end{subfigure}
                \caption{Suppose $x \in \mcXp$ and $u \in B_8 (\mcX_\Omega^{(l)})$. (A) $(x|_{\Lambda_i \cap 144 s(u)})$'s are left-aligned since $6$ is not a prime. (B) $(x|_{\Lambda_i \cap 96 s(u)})$'s are left-aligned since $2$ is a prime.}
                \label{fig:fig4}
        \end{figure}
        
        Next, we link the weakly mixing property of $\Omega$ and the $l$-directional mixing property of $\mcXp$. Proposition \ref{prop:space_partition} implies that the multiplicative transformation breaks the topological structure of pattern even more significantly in the case $\base$ is not a prime than the case $\base$ is a prime. More precisely, for each $i \in A_l = \mathbb{N} \setminus \base \mathbb{N}$, the product $(\vert u' \vert \alpha \base^k) i$ can be represented as $j \base^{n+k+c}$ for some $\base \nmid j$, where extra ``offset'' $c = c_{\alpha}$ is introduced. Hence, $(\vert u' \vert \alpha \base^k) s(v'|_{\Lambda_{i}})$ is not ``left-aligned'' (see Figure \ref{fig:fig4}). Lemma \ref{lem:p_offset} shows that the collection of these offsets $\{c_{\alpha}\}_{\alpha \in A_l}$ is bounded.

        \begin{lemma} \label{lem:p_offset}
                Given any $N \in \mathbb{N}$, there exists $M \in \mathbb{N}$ so that $A_\base A_{\base,N}:=\{a b: a \in A_\base, b \in A_{\base,N}\} \subset \cup_{i=0}^{M} \base^i A_l$.
        \end{lemma}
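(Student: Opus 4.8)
The plan is to control the exponent in the unique $\base$-factorization of each product $ab$. Recall that every $n \in \mathbb{N}$ decomposes uniquely as $n = \base^{k} \alpha$ with $\alpha \in A_\base$ and $k \geq 0$; write $\kappa(n)$ for this exponent $k$. Since $ab = \base^{\kappa(ab)} \alpha$ with $\alpha \in A_\base$, we have $ab \in \base^{\kappa(ab)} A_\base$, so it suffices to produce an integer $M = M(N)$ with $\kappa(ab) \leq M$ for all $a \in A_\base$ and all $b \in A_{\base,N}$. Then $A_\base A_{\base,N} \subseteq \cup_{i=0}^M \base^i A_\base$, which is exactly the claim.

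To compute $\kappa$, I would factor $\base = \prod_{j=1}^r p_j^{e_j}$ into distinct primes and let $\nu_p$ denote the usual $p$-adic valuation. For any $n$, one checks that $\base^k \mid n$ holds iff $k e_j \leq \nu_{p_j}(n)$ for every $j$, so that
$$
\kappa(n) = \min_{1 \leq j \leq r} \left\lfloor \frac{\nu_{p_j}(n)}{e_j} \right\rfloor .
$$
In particular $\kappa(n)$ is governed by whichever prime factor of $\base$ is ``most deficient'' in $n$.

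First I would exploit the hypothesis $\base \nmid a$: since $\kappa(a) = 0$, the formula forces some index $j_0$ (depending on $a$) with $\nu_{p_{j_0}}(a) \leq e_{j_0} - 1$. Next I would use $b \leq N$ to bound the contribution of $b$ at this prime, namely $\nu_{p_{j_0}}(b) \leq \log_{p_{j_0}} N$. As $\nu_{p_{j_0}}$ is additive, $\nu_{p_{j_0}}(ab) = \nu_{p_{j_0}}(a) + \nu_{p_{j_0}}(b) \leq e_{j_0} - 1 + \log_{p_{j_0}} N$, whence
$$
\kappa(ab) \leq \left\lfloor \frac{\nu_{p_{j_0}}(ab)}{e_{j_0}} \right\rfloor \leq \left\lfloor \frac{e_{j_0} - 1 + \log_{p_{j_0}} N}{e_{j_0}} \right\rfloor .
$$
Setting $M := \max_{1 \leq j \leq r} \lfloor (e_j - 1 + \log_{p_j} N)/e_j \rfloor$ eliminates the dependence on the $a$-determined index $j_0$ and yields a bound depending only on $N$ and the fixed base $\base$, completing the argument.

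The crux --- and the only step that might look delicate --- is the observation that the unbounded size of $a$ never inflates $\kappa(ab)$: although $a$ may carry an enormous power of some prime dividing $\base$, the condition $\base \nmid a$ guarantees a prime $p_{j_0}$ at which $a$ is under-supplied, and because $\kappa$ is a minimum over all prime factors of $\base$, it is precisely this deficient prime that caps $\kappa(ab)$. The boundedness of $b$ then ensures the cap is itself bounded independently of $a$ (notably, only $b \leq N$ is used, not $\base \nmid b$). The floor functions arising when $\base$ is composite demand only routine bookkeeping and present no real obstacle.
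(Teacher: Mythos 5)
Your proof is correct and follows essentially the same route as the paper's: factor $\base = \prod_j p_j^{e_j}$, use $\base \nmid a$ to locate a prime $p_{j_0}$ at which $a$ has valuation below $e_{j_0}$, bound the valuation of $b$ at that prime via $b \leq N$, and conclude that the $\base$-exponent of $ab$ is bounded. The only cosmetic difference is that you make the formula $\kappa(n) = \min_j \lfloor \nu_{p_j}(n)/e_j \rfloor$ explicit and bound $\nu_{p_{j_0}}(b)$ by $\log_{p_{j_0}} N$, whereas the paper defines $M$ as a maximum of $\lfloor k_{p_i}/m_i \rfloor$ over $k \in A_{\base,N}$; both yield the same conclusion.
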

        \begin{proof}
By the fundamental theorem of arithmetic, there exists the unique prime factorization of $l$ as $\base = p_1^{m_1} p_2^{m_2} \ldots p_r^{m_r}$. Let
$$
M=\max \{\lfloor \frac{k_{p_i}}{m_i} \rfloor: 1 \le i \le m, k \in A_{\base, N}\} + 1,
$$
where $k_{p_i} = \max \{m \in \mathbb{N}_0: p_i^m | k\}$. Given $a \in A_\base$ and $b \in A_{\base,N}$, there exists $1 \leq i \leq r$ such that $\lfloor \frac{a_{p_i}}{m_i} \rfloor = 0$. Since $\lfloor \frac{b_{p_i}}{m_i} \rfloor \le M-1$, it comes immediately that $\lfloor \frac{(a b)_{p_i}}{m_i} \rfloor \le M$. Hence, $a b \in \cup_{i=0}^{M} \base^i A_\base$ and the proof is complete.
        \end{proof}
        
        \begin{corollary} \label{cor:p_offset}
                Suppose $u', v' \in B(\mcXp)$ with $\vert u' \vert = \alpha_1 \base^{k_1}$. There exists $M \in \mathbb{N}$ such that for every $\alpha \in \mathbb{N}, k \in \mathbb{N}_0$, and $i \in A_{\base, \vert v' \vert}$, $i \vert u' \vert \alpha \base^k = j \base^{k_1 + k + c}$ for some $j \in A_{\base}$ and $c \le M$.
        \end{corollary}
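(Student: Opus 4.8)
The plan is to reduce the identity to an application of Lemma~\ref{lem:p_offset}. First I would unwind the left-hand side using the given factorization $\vert u'\vert = \alpha_1 \base^{k_1}$ (so $\alpha_1 \in A_\base$) together with the canonical $\base$-decomposition $\alpha = \alpha' \base^{k'}$, $\alpha' \in A_\base$, $k' \in \mathbb{N}_0$. This gives
\[
i\,\vert u'\vert\,\alpha\,\base^{k} \;=\; (i\,\alpha_1\,\alpha')\,\base^{k_1+k+k'},
\]
which shows that the only quantity requiring control is the power of $\base$ hidden inside the single integer $i\alpha_1\alpha'$, a product of three $\base$-coprime factors. Writing $i\alpha_1\alpha' = j\base^{c}$ with $j\in A_\base$, the resulting exponent is $k_1+k+k'+c$ and the assertion amounts exactly to the claim that $c$ is bounded independently of $\alpha$, $k$, and $i$.

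Next I would bound $c$. Since $i \le \vert v'\vert$ and $\alpha_1$ is fixed, the product $i\alpha_1$ is at most $N_0 := \alpha_1\vert v'\vert$, so its own $\base$-decomposition $i\alpha_1 = j_0\base^{c_0}$ has $j_0 \in A_{\base,N_0}$ and $c_0 \le \log_\base N_0$, a constant. Then $i\alpha_1\alpha' = (j_0\alpha')\base^{c_0}$, and because $\alpha' \in A_\base$ while $j_0 \in A_{\base,N_0}$, the factor $j_0\alpha'$ lies in $A_\base A_{\base,N_0}$. Lemma~\ref{lem:p_offset} (applied with $N=N_0$) supplies an $M_0$ with $A_\base A_{\base,N_0}\subset\bigcup_{e=0}^{M_0}\base^e A_\base$, hence $j_0\alpha' = \gamma\base^{e}$ for some $\gamma\in A_\base$ and $e\le M_0$. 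Combining these, $i\alpha_1\alpha' = \gamma\base^{c_0+e}$ with $c_0+e \le \log_\base N_0 + M_0 =: M$, and setting $j=\gamma$, $c=c_0+e$ completes the computation; note that $M$ depends only on $\alpha_1$ and $\vert v'\vert$, that is, only on $u'$ and $v'$.

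The step I expect to be the crux is the bound on $c$, together with the reason Lemma~\ref{lem:p_offset} is indispensable there: because $\base$ is allowed to be composite, $v_\base(\cdot)$ is not additive, so multiplying two $\base$-coprime integers can still manufacture a positive (though bounded) power of $\base$, and a naive valuation estimate fails. Lemma~\ref{lem:p_offset} packages precisely this phenomenon via the prime factorization $\base=p_1^{m_1}\cdots p_r^{m_r}$. A secondary bookkeeping point to handle carefully is the $\base$-part $\base^{k'}$ carried by an arbitrary $\alpha\in\mathbb{N}$: it contributes nothing to the offset $c$ and is simply absorbed into the exponent alongside $k$, so one must keep the genuine ``$\base$-coprime collision'' that produces $c$ separate from this harmless shift.
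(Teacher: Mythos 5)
Your proof is correct and takes essentially the same route as the paper's: the paper's proof first notes $\alpha_1 A_{\base,|v'|} \subseteq \bigcup_{i=0}^{M_1}\base^i A_{\base,\alpha_1|v'|}$ (your extraction of $i\alpha_1 = j_0\base^{c_0}$ with $c_0 \le \log_\base(\alpha_1|v'|)$), then applies Lemma~\ref{lem:p_offset} with $N=\alpha_1|v'|$ to get $M_2$, and sets $M = M_1+M_2$, which is exactly your $M = \log_\base N_0 + M_0$ phrased set-wise rather than element-wise. The only point where you go beyond the paper is the decomposition $\alpha = \alpha'\base^{k'}$: the paper's computation tacitly works with $\alpha \in A_\base$ (which is how the corollary is actually invoked in the proof of Theorem~\ref{thm:weak_mixing_equivalence}), and your observation that for general $\alpha \in \mathbb{N}$ the factor $\base^{k'}$ must be absorbed into the exponent alongside $k$ is the correct reading of the statement as written.
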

        \begin{proof}
Observe that there exists $M_1 \in \mathbb{N}$ such that $\alpha_1 A_{\base, |v'|} \subseteq \cup_{i=0}^{M_1} \base^i A_{\base, \alpha_1 \vert v' \vert}$.
                Lemma \ref{lem:p_offset} shows that there exists $M_2 \in \mathbb{N}$ so that $A_\base A_{\base,\alpha_1 \vert v' \vert} \subset \cup_{i=0}^{M_2} \base^i A_\base$. It follows that
                \begin{equation*} \label{eqn:offset_estimation}
                        \vert u' \vert A_\base \base^k A_{\base, \vert v' \vert} \subseteq \base^{k+k_1} A_\base \cup_{i=0}^{M_1} \base^i A_{\base, \alpha_1 \vert v' \vert} \subseteq \cup_{i=0}^{M_1+M_2} \base^{k+k_1+i} A_{\base},
                \end{equation*}
                the desired result follows by letting $M=M_1+M_2$.
        \end{proof}

With the estimation of the offsets, we are ready for the proof of Theorem \ref{thm:weak_mixing_equivalence}.

        \begin{proof}[Proof of Theorem~\ref{thm:weak_mixing_equivalence}]        
The proof is divided into three parts. First we show that (1) $\Leftrightarrow$ (2). After demonstrating the equivalence of (1), (3), and (4), it follows that (1) $\Leftrightarrow$ (5).

(1) $\Rightarrow$ (2).
Given $u', v' \in B(\mathsf{X}_{\Omega}^{(l)})$ with $|u'| = \alpha_1 l^{k_1}$ for some $\alpha_1 \in A_{\base}$, Corollary \ref{cor:p_offset} indicates there exists $M \in \mathbb{N}$ such that for $\alpha \in A_\base, k \in \mathbb{N}_0$, and $i \in A_{l, |v'|}$,
$$
i |u'| \alpha l^k = i' l^{k_1 + k + c} \quad \text{for some } i' \in A_l \text{ and } c \leq M.
$$
Let
\begin{align*}
        \Delta = \{(u'|_{\Lambda_i}, w^{(r,j)} v'|_{\Lambda_j}): &1 \leq i \leq \xi(u'), 1 \leq j \leq \xi(v'), \\
        &0 \leq r \leq M\},
\end{align*}
be a finite collection of pairs of blocks in $\Omega$, where $\epsilon$ is the empty word and $w^{(r,j)} \in B(\Omega)\cup \{\epsilon\}$ is chosen so that $w^{(r,j)} v'|_{\Lambda_j} \in B(\Omega)$. Since $\mathsf{X}_{\Omega}^{(l)}$ is weakly mixing, there exists $K \in \mathbb{N}$ such that for $(\overline{u}, \overline{v}) \in \Delta$, there exists $\overline{w} \in B_{K-\vert \overline{u} \vert}(\Omega)$ such that $\overline{u} \overline{w} \overline{v} \in B(\Omega)$ by Proposition \ref{prop:weak_mixing}. Note that $K \ge \vert u|_{\Lambda_i} \vert$ for every $i \in A_{\base, \vert u' \vert}$ and so $K \ge k_1$.

Next we show that for each $\alpha \in A_{\base}$ there exists $x \in \mathsf{X}_{\Omega}^{(l)}$ such that $x|_{s(u')} = u'$ and $(\Pi_{|u'| \alpha l^{K-k_1}} x) |_{s(v')} = v'$. Observe that $(\Pi_{|u'| \alpha l^{K-k_1}} x) |_{s(v')} = v'$ if and only if 
$$
x_{j'}|_{\log_\base \frac{\vert u' \vert \alpha \base^{K-k_1} j}{j'} + \left[1, \vert v'|_{\Lambda_j} \vert \right]} = v'|_{\Lambda_j} \quad \text{for} \quad 1 \leq j \leq \xi(v'),
$$
where $j' \in \mathbb{N}$ satisfies $(\vert u' \vert \alpha \base^{K-k_1}) \Lambda_j \subset \Lambda_{j'}$ and $x_{j'} := x|_{\Lambda_{j'}}$. The construction of $x$ is as follows. For $1 \leq i \leq \xi(u')$, if there exists $1 \leq j \leq \xi(v')$ such that $|u'| \alpha l^K \Lambda_j \subseteq \Lambda_i$, the above discussion implies there exists $x_i$ such that
$$
x_i|_{s(u'|_{\Lambda_i})} = u'|_{\Lambda_i} \text{ and } x_i|_{\log_\base \frac{\vert u' \vert \alpha \base^{K-k_1} j}{i} + \left[1, \vert v'|_{\Lambda_j} \vert \right]} = v'|_{\Lambda_j},
$$
which means that $x_i|_{\log_\base \frac{\vert u' \vert \alpha \base^{K-k_1} j}{i} + m} = v'_{j \base^{m-1}}$ for $m \in [1, \vert v'|_{\Lambda_j} \vert ]$; otherwise, the existence of $x_i$ comes from the extensibility of $\Omega$. For the case where $i > \xi(u')$, the existence of $x_i$ also comes from the extensibility of $\Omega$. The desired $x \in \mathsf{X}_{\Omega}^{(l)}$ then follows by letting $x|_{\Lambda_i} = x_i$.
            
            (2) $\Rightarrow$ (1). To show that for $u_{1}, u_{2}, v_{1},v_{2} \in B(\Omega)$ there are $x_{1}, x_{2} \in \Omega$ and $k \in \mathbb{N}$ such that $x_{i}|_{s(u_{i})}=u_{i}$ and that $x_{i}|_{k+s(v_{i})}=v_{i}$ for $i=1,2$, let $u',v' \in B(\mcXp)$ such that $u_1, u_2, v_1, v_2$ are subword of $u'|_{\Lambda_1}, u'|_{\Lambda_{l+1}}, v'|_{\Lambda_1}, v'|_{\Lambda_{l+1}}$ respectively, and that $\vert u' \vert =\base^{k_1}$ for some $k_1 \in \mathbb{N}$. Since $\mcXp$ is $l$-directional mixing, there is a $k \in \mathbb{N}$ and an $y \in \mcXp$ such that $y|_{s(u')} = u'$ and $(\Pi_{\vert u' \vert \base^k} y)|_{s(v')} = v'$. The proof is completed by letting $x_1=y|_{\Lambda_1}$ and $x_2=y|_{\Lambda_{l+1}}$.

The discussion of (1) $\Rightarrow$ (3) and (4) $\Rightarrow$ (1) are similar to that of (1) $\Rightarrow$ (2) and (2) $\Rightarrow$ (1), respectively. Since (4) is a special case of (3), the equivalence of (1), (3), and (4) then follows.
            
The demonstration of (1) $\Leftrightarrow$ (5) is analogous to the derivation of (1) $\Leftrightarrow$ (3) above together with the proof of Theorem \ref{thm:extensibility_equivalence} (1) $\Leftrightarrow$ Theorem \ref{thm:extensibility_equivalence} (5). Thus the detailed elucidation is omitted for the sake of compactness.
        \end{proof}


We finish this section with the proof that $\Omega$ is mixing if and only if $\mcXp$ is mixing.

        \begin{proof} [Proof of Theorem~\ref{thm:mixing_equivalence}]
                The theorem is proved in the order (1) $\Rightarrow$ (2) $\Rightarrow$ (3) $\Rightarrow$ (1).
                        
(1) $\Rightarrow$ (2). Suppose $u', v' \in B(\mcXp)$ are given. Let $u_i = u'|_{\Lambda_i}, v_j = v'|_{\Lambda_j}$ for $i \in A_{\base, \vert u' \vert}, j \in A_{\base, \vert v' \vert}$. Since $\Omega$ is mixing and $A_{\base, \vert u' \vert}$, $A_{\base, \vert v' \vert}$ are finite, there exists $N_0 \in \mathbb{N}$ such that for $m \geq N_0$ there exists $x = x(\overline{u}, \overline{v}) \in \Omega$ such that $x|_{[1, |\overline{u}|]} = \overline{u}, x|_{[m+|\overline{u}|+1, m+|\overline{u}|+|\overline{v}|]} = \overline{v}$, provided $\overline{u} = u_i, \overline{v} = v_j$ for some $i \in A_{\base, \vert u' \vert}, j \in A_{\base, \vert v' \vert}$. We claim that for $\alpha \in \mathbb{N}, k \in \mathbb{N}_0$ such that $\alpha \base^k \ge \base^{N_0}$ there is $y \in \mcXp$ satisfying $y|_{s(u')} = u'$ and $(\Pi_{\vert u' \vert (\alpha \base^k)} y)|_{s(v')}= v'$, which is equivalent to mixing property in $\mcXp$ by Proposition \ref{prop:mixing-transitive-multiplicative-shift}. Similar to the proof of Theorem \ref{thm:weak_mixing_equivalence}, it suffices to show that whenever $(\vert u' \vert \alpha \base^k) \Lambda_j \subset \Lambda_i$ for some $1 \leq i \leq \xi(u')$ and $1 \leq j \leq \xi(v')$, there exists $x_i \in \Omega$ such that
$$
x_i|_{\left[1, \vert u'|_{\Lambda_i} \vert\right]} = u'|_{\Lambda_i} \quad \text{and} \quad x_i|_{\log_\base \frac{\vert u' \vert \alpha \base^k j}{i} + \left[1, \vert v'|_{\Lambda_j} \vert\right]} = v'|_{\Lambda_j}.
$$
Equivalently, we need to show that $\log_\base \frac{L_2}{i} - \log_\base \frac{L_1}{i} \ge N_0$, where $L_1=\max (s(u') \cap {\Lambda_i})$ and $L_2=\min (\vert u' \vert \alpha \base^k (s(v') \cap \Lambda_j))$. Indeed,
$$
\log_\base \frac{L_2}{i} - \log_\base \frac{L_1}{i} = \log_\base \frac{\vert u' \vert}{L_1} \alpha \base^k j \ge m.
$$
Therefore, $\Omega$ being mixing implies that $\mcXp$ is mixing.
                
                (2) $\Rightarrow$ (3). This could be proved by choosing proper $\alpha$ or $k$ in (2).

(3) $\Rightarrow$ (1). Given $u, v \in B(\Omega)$, let $u', v' \in B(\mcXp)$ such that $|u'| = \base^{|u|-1}, |v'| = \base^{|v|-1}$, $u'|_{\Lambda_1} = u$, and $v'|_{\Lambda_1} = v$. Hence, there exists $N \in \mathbb{N}$ such that for $\alpha \in A_\base, k \geq N$ there exists $y \in \mcXp$ such that $y|_{[1, |u'|]} = u'$ and $(\Pi_{|u'| \alpha \base^k} y)|_{[1, |v'|]} = v'$. We claim that for $m \geq  N$ there exists $x \in \Omega$ such that $x|_{[1, |u|]} = u, x|_{[|u|+m+1, |u|+m+|v|]} = v$. Indeed, let $\alpha = 1$ and $k = m+1 > N$, there exists $y \in \mcXp$ such that $y|_{s(u')} = u'$ and $(\Pi_{|u'| \alpha \base^k} y)|_{s(v')} = v'$. In other words, $(y|_{\Lambda_1})|_{[1, |u|]} = u$ and
$$
(\Pi_{|u'| \alpha \base^k} y)_i = y_{i \base^{|u| + m}} = v_j \quad \text{for} \quad i= \base^{j-1}, j = 1, \ldots, |v|.
$$
The proof is then complete by letting $x = y|_{\Lambda_1}$.
        \end{proof}

\section{Summary and Discussion}

\begin{table}[t] 
        \begin{tabular}{c|ccc}
                \diagbox[width=10em]{$\Omega$}{$\mcXp$} & transitivity & $\base$-directional mixing & mixing \\
                \hline
                extensibility & EQ & T & T  \\
                transitivity & F & T & T \\
                weakly mixing & F & EQ & T \\
                mixing & F & F & EQ \\
        \end{tabular}
        \caption{Summary of the main results. In this table, `T' means that the property in $\Omega$ implies the property in $\mcXp$ and `F' means the opposite, and `EQ' means two properties are equivalent.}
        \label{table:summary}
\end{table}

Suppose $\Omega$ is a traditional shift space and $\mcXp$ is the corresponding multiplicative shift space for some $l > 1$. We investigate the relations between the mixing properties of $\Omega$ and $\mcXp$. After introducing the $l$-directional mixing property, we reveal some if-and-only-if connection between mixing properties of two systems. Table \ref{table:summary} summarizes the main results of this paper. It is seen that there are still open problems remained to be studied. We list these problems of interest in the following, some of which are in preparation.

\begin{question}
        In Theorem \ref{thm:weak_mixing_equivalence}, (5) is equivalent to the others if $\base$ is a prime number. Does this hold for arbitrary $\base > 1$?
\end{question}

\begin{question}
        Is there any equivalent condition for $\mcXp$ as transitivity of $\Omega$?
\end{question}

\begin{question}
        Do Theorems \ref{thm:extensibility_equivalence}, \ref{thm:weak_mixing_equivalence}, and \ref{thm:mixing_equivalence} hold for two-sided multiplicative subshift $\mcXp \subset \alphabet^{\Zint}$?
\end{question}


\bibliographystyle{amsplain}
\bibliography{MultiplicativeShiftMixing}

\end{document}